\theoremstyle{definition}
\newtheorem{theorem}{Theorem}
\newtheorem*{theorema}{Theorem}
\newtheorem{lemma}[theorem]{Lemma}
\newtheorem{proposition}[theorem]{Proposition}
\newtheorem{definition}[theorem]{Definition}
\newtheorem{example}[theorem]{Example}
\newtheorem{remark}[theorem]{Remark}
\numberwithin{equation}{section}
\newcommand{\Mbar}[2]{\overline{\mathcal{M}}_{#1,#2}}
\newcommand{\A}{\mathbf{A}}
\newcommand{\B}{\mathbf{B}}
\newcommand{\K}{\lambda}
\newcommand{\p}{\mathrm{\bf{p}}}
\newcommand{\q}{\mathrm{\bf{q}}}
\newcommand{\tq}{\mathrm{\tilde{\bf{q}}}}
\newcommand{\br}{\mathrm{\bf{r}}}
\newcommand{\tbr}{\mathrm{\tilde{\bf{r}}}}
\newcommand{\s}{\mathrm{\bf{s}}}
\newcommand{\T}{\mathrm{\bf{t}}}
\newcommand{\Tk}{\mathrm{\bf{T}_k}}
\newcommand{\PP}{\mathrm{P}}
\newcommand{\Set}[1]{\left\lbrace #1 \right\rbrace}
\DeclareRobustCommand{\stirling}{\genfrac\{\}{0pt}{}}
\DeclareRobustCommand{\Choose}{\genfrac{(}{)}{0pt}{}}
\begin{document}
\title[]{The product rule in $\kappa^*(\mathcal{M}_{g,n}^{ct})$ }%
\author{I. Setayesh}%
\address{Department of Mathematics, Faculty of Mathematical Sciences, Tarbiat Modares University, P.O. Box 14115-137, Tehran, Iran.
}
\address{School of Mathematics, Institute for Research in Fundamental Sciences (IPM), P.O. Box 19395-5746, Tehran, Iran.}
\email{setayesh@ipm.ir}%

\thanks{}%
\subjclass{}%
\keywords{}%


\begin{abstract}
We describe explicit formulas for the product rule in $\kappa^*(\mathcal{M}_{g,n}^{ct})$. 
\end{abstract}
\maketitle

\section{Introduction}

Let $\epsilon : \Mbar{g}{n+1} \to \Mbar{g}{n}$ be the forgetful map, viewed as the universal curve over $\Mbar{g}{n}$, the moduli space of stable curves of genus $g$ with $n$ marked points. Let $\mathbb{L}_{i} \to \Mbar{g}{n+1}$ be the cotangent line bundle over $\Mbar{g}{n+1}$, whose fiber over a given curve is the cotangent space at the $i^{th}$ marked point. Define
$$\psi_i= c_1(\mathbb{L}_{i})\in A^{1}(\overline{\mathcal{M}}_{g,n+1})\ \ 
\text{and}\ \ \kappa_i = \epsilon_{*}(\psi_{n+1}^{i+1}) 
\in A^{i}(\overline{\mathcal{M}}_{g,n}) \ .$$

By restriction, one can define the psi and kappa classes over $\mathcal{M}_{g,n}$, the moduli space of smooth curves of genus $g$ with $n$ marked poines, and $\mathcal{M}_{g,n}^{ct}$, the moduli space of curves of compact type. Consider the sub-ring of $A^*(\mathcal{M}_{g,n}^{ct})$ generated by the kappa classes, and call it the kappa ring. In \cite{Rahul-k} Pandharipande described an additive basis for the kappa ring.

\begin{theorema}\cite{Rahul-k}
Given $D,n \in \mathbb{N}$, the set
$$\{ \kappa_{\mathbf{p}} \ | \ \mathbf{p} \in P(D,2g+n-D-2)\ \} \ $$
generates $\kappa^D(\mathcal{M}_{g,n}^{ct})$ as a $\mathbb{Q}$-vector space, and for $n>0$ it is a $\mathbb{Q}$-basis.
\end{theorema}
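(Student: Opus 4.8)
The argument splits into a spanning statement, valid for every $n\ge 0$, and a linear independence statement that genuinely needs $n>0$; the equality $\dim_{\mathbb Q}\kappa^{D}(\mathcal{M}_{g,n}^{ct})=|P(D,2g+n-D-2)|$ then emerges by squeezing between an upper bound from relations and a lower bound from a pairing. The common technical input is the $\lambda_g$-evaluation. Since $\lambda_g=c_g(\mathbb{E})$ restricts to zero along the locus of non-separating nodes, the functional $\eta\mapsto\int_{\overline{\mathcal{M}}_{g,n}}\eta\,\lambda_g$ on classes of degree $2g-3+n$ factors through restriction to $\mathcal{M}_{g,n}^{ct}$ and realizes the one-dimensional socle evaluation there, the top degree of the compact-type tautological ring being $2g-3+n$. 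Any integral of a $\psi$--$\kappa$ monomial against $\lambda_g$ reduces, after converting the $\kappa$-factors into push-forwards of $\psi$-monomials along the forgetful maps (Arbarello--Cornalba) and invoking the closed $\lambda_g$-formula of Faber--Pandharipande, to an explicit multinomial expression; in particular every number occurring below is computable.

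\emph{Spanning (all $n$).} By definition $\kappa^{D}$ is spanned by the monomials $\kappa_{\mathbf p}=\kappa_{p_1}\cdots\kappa_{p_\ell}$ as $\mathbf p\vdash D$ ranges over all partitions, so it is enough to rewrite every monomial with $\ell>2g+n-D-2$ factors in terms of monomials with fewer factors. I would extract the relations needed for this from the boundary-support vanishing of Ionel and Graber--Vakil: a $\psi$--$\kappa$ monomial of sufficiently high degree is supported on $\partial\overline{\mathcal{M}}_{g,n}$, and over compact type only the separating nodes survive. Translating these boundary expressions back into kappa classes and inducting on the number of factors should reduce the spanning set to exactly $\{\kappa_{\mathbf p}:\mathbf p\in P(D,2g+n-D-2)\}$, yielding both the first assertion and the bound $\dim\kappa^{D}\le|P(D,2g+n-D-2)|$.

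\emph{Independence ($n>0$).} For the matching lower bound I would pair the retained monomials against tautological test classes of complementary degree $2g-3+n-D$ through the $\lambda_g$-functional. Ordering $P(D,2g+n-D-2)$ by dominance, the plan is to assign to each $\mathbf p$ a test monomial $\gamma(\mathbf p)$, built from the marked-point classes $\psi_1,\dots,\psi_n$ and lower kappa classes, so that the square matrix $\bigl(\int_{\overline{\mathcal{M}}_{g,n}}\kappa_{\mathbf p}\,\gamma(\mathbf p')\,\lambda_g\bigr)_{\mathbf p,\mathbf p'}$ is triangular with nonzero diagonal; the multinomial shape of the $\lambda_g$-formula is exactly what keeps the diagonal from vanishing and controls the off-diagonal entries. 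Invertibility forces the classes $\kappa_{\mathbf p}$, $\mathbf p\in P(D,2g+n-D-2)$, to be linearly independent, and combined with the spanning bound it gives the basis statement.

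The main obstacle is to make the two bounds meet at exactly the partition count. On the spanning side one must organize the boundary relations into a reduction that terminates precisely at $\ell\le 2g+n-D-2$; on the independence side one must produce the triangularizing test classes and prove the diagonal nonvanishing. The latter is where $n>0$ is indispensable: the psi classes at the marked points supply exactly the extra functionals needed to separate all $|P(D,2g+n-D-2)|$ monomials. For $n=0$ these functionals are missing, additional relations among the kappa classes appear, as in Faber's description of $R^{*}(\mathcal{M}_g)$, and only the spanning half of the statement can survive.
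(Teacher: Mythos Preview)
The statement you are attempting to prove is not proved in this paper at all: it is quoted, with citation, from Pandharipande's paper \cite{Rahul-k} as background, and the present paper simply uses it (together with Theorem~\ref{iota} and Theorem~\ref{Rahul-additive}) as input for the product-rule computation. There is therefore no proof here to compare your proposal against.

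Relative to Pandharipande's actual argument, your outline is in the right neighbourhood: he does obtain the spanning statement by producing universal relations that express long kappa monomials in terms of shorter ones, and he does obtain the independence for $n>0$ via the $\lambda_g$-pairing. However, your write-up is explicitly a plan rather than a proof. On the spanning side you invoke Ionel/Graber--Vakil vanishing and say the resulting boundary expressions ``should reduce'' to monomials with at most $2g+n-D-2$ parts; the nontrivial content is precisely to exhibit relations that terminate at that exact bound, and you do not do this. On the independence side you assert the existence of test classes $\gamma(\mathbf p)$ making the pairing matrix triangular with nonzero diagonal, but you neither construct them nor verify the diagonal nonvanishing; ``the multinomial shape of the $\lambda_g$-formula is exactly what keeps the diagonal from vanishing'' is an expectation, not an argument. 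You yourself identify both points as ``the main obstacle,'' so as written this is a strategy memo rather than a proof, and in any case it is aimed at a theorem the present paper does not set out to prove.
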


The natural question to ask, as first raised by Pandharipande \cite{Rahul-k}, is to determine explicit formulas for the product rule in the kappa ring of $\mathcal{M}_{g,n}^{ct}$, that is the main result of this paper.

\begin{theorem}\label{main}
Let $\A=\Set{a_1,\cdots,a_k}$ be a multi-set of integers, and $n,g,d \in \mathbb{N}$ be integers such that $d=2g+n-\sum a_i-2$. In $A^{*}(\mathcal{M}_{g,n}^{ct})$ we have:
$$\kappa_{a_1}\cdots\kappa_{a_k}=\sum_{\p\in SP(\A,d)} x_{\p} \kappa_{\p(\A)}$$
where 
$$x_{\p}=\sum_{\T \leq \br \leq \p \in SP(\A)}  \frac{(-1)^{\ell(\A)+\ell(\T)+\ell(\br)+M}}{(|\T|+\mathbb{1})!} 
\Choose{\ell(\T)-\ell(\br)}{M-\ell(\br)}
\prod_{j=1}^{\ell(\br)} (|\br_j+\mathbb{1}|)! \prod_{l=1}^{\ell(\p)}(\ell(\br|_{\p_l})-1)!  $$ in which $M = \min \{ \ell(\T),d \}$, $SP(\A)$ (resp. $SP(\A,d)$) is the set of partitions of the multi-set $\A$ (resp. into at most $d$ parts) and $\ell(\p)$ is the number of components of $\p$ (for notations see Definition \ref{def1} and Definition \ref{def2}). 
\end{theorem}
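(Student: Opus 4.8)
The plan is to establish the product rule by working in the kappa ring and tracking how products of kappa classes decompose in terms of the additive basis furnished by Pandharipande's theorem. The key structural fact I would exploit is that kappa classes have a natural multiplicative behaviour governed by the combinatorics of partitions of the index multi-set: pushing forward along the forgetful map $\epsilon$ and using the pullback relation between psi classes on $\Mbar{g}{n+1}$ and $\Mbar{g}{n}$ yields a formula expressing a single $\kappa$ obtained from a product of psi powers in terms of $\kappa$'s indexed by set partitions. I would first recall (or re-derive) the basic recursion $\epsilon_*(\psi_{n+1}^{a+1}\cdot\alpha) $ and the comparison formula $\psi_i=\epsilon^*\psi_i+D_i$ where $D_i$ is the relevant boundary/diagonal divisor, noting that on $\mathcal{M}_{g,n}^{ct}$ the compact-type hypothesis kills exactly the contributions that would otherwise obstruct a clean partition-indexed answer.

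The heart of the argument is a two-step combinatorial inversion. First I would derive an "elementary" product formula: $\kappa_{a_1}\cdots\kappa_{a_k}=\sum_{\p} y_{\p}\,\kappa_{\p(\A)}$ where the sum runs over all set partitions $\p\in SP(\A)$ and $y_{\p}$ is given by a sum over chains $\T\le\br\le\p$ encoding, respectively, the finest grouping coming from repeated application of the forgetful-map recursion ($\T$), an intermediate merging ($\br$), and the final coarsening to $\p$. The binomial coefficient $\Choose{\ell(\T)-\ell(\br)}{M-\ell(\br)}$ and the factorials $\prod_j(|\br_j+\mathbb{1}|)!$ and $\prod_l(\ell(\br|_{\p_l})-1)!$ are exactly the inclusion–exclusion and multinomial weights that arise when one iterates the single-class relation and then collects terms by their image partition; I would prove this by induction on $k$, peeling off one factor $\kappa_{a_k}$ at a time and matching the recursive weight to the claimed closed form. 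The sign $(-1)^{\ell(\A)+\ell(\T)+\ell(\br)+M}$ bookkeeps the alternating contributions from each merge.

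The second step converts this sum over all of $SP(\A)$ into the stated sum over $SP(\A,d)$ — partitions into at most $d$ parts. Here the truncation $M=\min\{\ell(\T),d\}$ does the work: because $d=2g+n-\sum a_i-2$ is precisely the socle-type bound appearing in Pandharipande's basis, any $\kappa_{\p(\A)}$ whose index partition has more than $d$ parts must be re-expressed in the basis, and the cutoff at $M$ encodes the resulting linear relations. I would verify that after imposing these relations the coefficient of each basis element $\kappa_{\p(\A)}$ with $\p\in SP(\A,d)$ collapses to exactly $x_{\p}$. The main obstacle I anticipate is bookkeeping the cancellation in this truncation step: one must check that the iterated inclusion–exclusion, the binomial cutoff, and the factorial weights conspire so that no basis element outside $SP(\A,d)$ survives and that the surviving coefficients match $x_{\p}$ on the nose. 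To control this I would organize the computation around the partial order on set partitions and use Möbius inversion on that poset, reducing the identity to a finite collection of binomial–factorial identities that can be checked directly.
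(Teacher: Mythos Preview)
Your proposal misses the mechanism that actually produces the numerical coefficients, and the two-step plan as stated does not work.

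First, the ``elementary'' step. You propose to obtain a formula $\kappa_{a_1}\cdots\kappa_{a_k}=\sum_{\p\in SP(\A)} y_{\p}\kappa_{\p(\A)}$ by iterating the forgetful-map relation $\psi_i=\epsilon^*\psi_i+D_i$ and peeling off one factor at a time. But in the kappa ring of $\mathcal{M}_{g,n}^{ct}$ (or of $\Mbar{g}{n}$), before any relations are imposed, $\kappa_{a_1}\cdots\kappa_{a_k}$ \emph{is} $\kappa_{\p(\A)}$ for the finest partition; there is no nontrivial universal expression over all of $SP(\A)$ to derive. The Faber/Arbarello--Cornalba identity relating $\psi$-pushforwards and $\kappa$-monomials holds already in $A^*(\Mbar{g}{n})$ and gives no relations among the $\kappa_\p$. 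So Step~1 is either trivial or circular. Moreover, the factorials $(|\br_j+\mathbb{1}|)! = (|\br_j|+\ell(\br_j))!$ are not inclusion--exclusion weights arising from iterated boundary corrections; they come from the genus-zero $\psi$-integrals $\int_{\Mbar{0}{N}}\prod\psi_i^{c_i}=\binom{N-3}{c_1,\ldots}$, and there is no way to see them from the forgetful recursion alone.

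Second, the truncation step. You say that any $\kappa_{\p(\A)}$ with $\ell(\p)>d$ ``must be re-expressed in the basis'' and that the cutoff at $M$ encodes the resulting linear relations, to be verified by M\"obius inversion. But this re-expression is the entire content of the theorem: one needs the actual relations in $\kappa^*(\mathcal{M}_{g,n}^{ct})$, not just the knowledge that a basis exists. The paper obtains them by a completely different route. It uses Pandharipande's surjection $\iota_{g,n}:\kappa^*(\Mbar{0}{n+2g})\to\kappa^*(\mathcal{M}_{g,n}^{ct})$ to reduce to genus zero, then exploits Keel's theorem that $A^*(\Mbar{0}{n})$ has a perfect pairing with an explicit basis of boundary strata $[G]$. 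Pairing both sides of the sought identity against each stratum reduces, via Proposition~\ref{baby-case}, to top-degree computations that are controlled by the multinomial $\psi$-integrals; an induction on $\ell(\p)$ then pins down $x_\p$ as $\sum_{\q\le\p,\ \ell(\q)\le d}\K_\q N_{[\p:\q]}$. The closed form in the statement is obtained only after a further nontrivial combinatorial identity (Theorem~\ref{combthm}), proved via Pr\"ufer codes on spanning trees. None of these ingredients---the genus-zero reduction, the perfect pairing, the explicit $\psi$-integrals, or the Pr\"ufer-code identity---appear in your plan, and M\"obius inversion on the partition lattice is not a substitute for them.
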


{\bf{Plan of the paper.}}
In Section 2 we review some known results relating kappa classes and pushforwards of the psi classes. Section 3 contains the proof of the main theorem. The main idea is to use the following theorem of Pandharipande.

\begin{theorem}\cite{Rahul-k}\label{iota}
There is a canonical surjective map $\iota_{g,n}:\kappa^*(\Mbar{0}{n+2g}) \to \kappa^*(\mathcal{M}_{g,n}^{ct})$ which is an isomorphism for $n>0$.
\end{theorem}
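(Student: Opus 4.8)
The plan is to construct $\iota_{g,n}$ not from a morphism of moduli spaces but as the abstract $\mathbb{Q}$-algebra homomorphism determined on generators by $\kappa_i\mapsto\kappa_i$, and then to control its kernel. Both $\kappa^*(\mathcal{M}_{g,n}^{ct})$ and $\kappa^*(\Mbar{0}{2g+n})$ are by definition generated as $\mathbb{Q}$-algebras by $\kappa_1,\kappa_2,\dots$, so writing $R=\mathbb{Q}[\kappa_1,\kappa_2,\dots]$ I obtain two graded surjections $\pi_{ct}\colon R\to\kappa^*(\mathcal{M}_{g,n}^{ct})$ and $\pi_0\colon R\to\kappa^*(\Mbar{0}{2g+n})$ with kernels $I_{ct}$ and $I_0$. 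The assignment $\kappa_{\mathbf{p}}\mapsto\kappa_{\mathbf{p}}$ descends to a well-defined map $\iota_{g,n}$ exactly when $I_0\subseteq I_{ct}$, and in that case $\iota_{g,n}$ is automatically surjective because its image contains every generator $\kappa_i$ of the target. Thus existence and surjectivity are together equivalent to the single inclusion $I_0\subseteq I_{ct}$.

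For the isomorphism statement I would close the loop with a dimension count. Since every stable genus-$0$ curve is of compact type, $\Mbar{0}{2g+n}$ is itself an instance of compact-type moduli, so the additive-basis theorem of \cite{Rahul-k} quoted above applies with $(g,n)$ replaced by $(0,2g+n)$: in each degree $D$ its kappa group has the basis indexed by $P(D,2g+n-D-2)$, the hypothesis there becoming $2g+n>0$. Applying the same basis theorem on the other side, for $n>0$ the group $\kappa^D(\mathcal{M}_{g,n}^{ct})$ has a basis indexed by the \emph{same} set $P(D,2g+n-D-2)$. Hence $\dim_{\mathbb{Q}}\kappa^D(\Mbar{0}{2g+n})=\dim_{\mathbb{Q}}\kappa^D(\mathcal{M}_{g,n}^{ct})$ for all $D$, so once $\iota_{g,n}$ is a well-defined surjection it is forced to be an isomorphism in each degree. (For $n=0$ the right-hand generating set need not be independent, so only surjectivity survives, matching the statement.) This reduces the entire theorem to proving $I_0\subseteq I_{ct}$.

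The remaining inclusion is the heart of the matter, and I expect it to be the main obstacle. I would detect membership in a relation ideal through the socle pairing. By the basis theorem above, the top nonzero degree on each side is $2g-3+n=\dim\Mbar{0}{2g+n}$, where the index set $P(2g-3+n,1)$ is a single partition, so both kappa rings have a one-dimensional socle; fixing generators identifies both with $\mathbb{Q}$. The nontrivial data is then the collection of top intersection numbers $\int\kappa_{\mathbf{p}}\kappa_{\mathbf{q}}$ landing in these socles. Rewriting $\kappa_a=\epsilon_*(\psi^{a+1})$ and pushing down turns each such number on $\mathcal{M}_{g,n}^{ct}$ into a $\lambda_g\lambda_{g-1}$-weighted $\psi$-integral over $\Mbar{g}{n}$, the factor $\lambda_g\lambda_{g-1}$ being what confines the socle-degree evaluation to compact type. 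The key claim is a genus-reduction proportionality: via Mumford's Grothendieck--Riemann--Roch together with the string and dilaton equations, every such Hodge integral equals a fixed nonzero constant $c=c(g,n)$, independent of $\mathbf{p}$ and $\mathbf{q}$, times the corresponding purely genus-$0$ kappa intersection number on $\Mbar{0}{2g+n}$ (this is where the count $2g+n$ originates, each handle contributing like two extra marked points). Granting this and the nondegeneracy of the compact-type socle pairing, a class lies in $I_{ct}$ iff all its top pairings vanish iff, by the proportionality, all the genus-$0$ pairings vanish iff it lies in $I_0$; in particular $I_0\subseteq I_{ct}$, completing the reduction. The genuinely hard points, which I expect to consume most of the work, are the monomial-independence of $c$ in the Hodge-integral genus reduction and the perfect-pairing property of $\kappa^*(\mathcal{M}_{g,n}^{ct})$ in the degrees actually used.
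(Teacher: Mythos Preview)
The paper does not contain a proof of this theorem: it is quoted verbatim from \cite{Rahul-k} and used as a black box to transport the genus-zero computation of Theorem~\ref{main-genus-zero} to $\mathcal{M}_{g,n}^{ct}$. There is therefore nothing in the paper to compare your proposal against.

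That said, your outline is broadly faithful to how the result is actually established in \cite{Rahul-k}. The reduction to the ideal inclusion $I_0\subseteq I_{ct}$, the socle identification in top degree, and the proportionality between compact-type kappa evaluations (via $\lambda_g$) and genus-zero $\psi$-integrals are indeed the ingredients Pandharipande uses; the constant $c(g,n)$ arises from the $\lambda_g$-formula. One caution about your logical structure: you invoke the additive-basis theorem on both sides to deduce the dimension equality, but in \cite{Rahul-k} that basis theorem and the present isomorphism are not independent results---the linear independence of the $\kappa_{\mathbf p}$ for $\mathbf p\in P(D,2g+n-D-2)$ in $\kappa^D(\mathcal{M}_{g,n}^{ct})$ is itself proved by transporting the genus-zero pairing through exactly the proportionality you describe. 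So your dimension-count shortcut is circular unless you take the basis theorem as given; a cleaner formulation is to show directly that the proportionality of socle pairings forces $I_0=I_{ct}$ (not merely $I_0\subseteq I_{ct}$) once one knows the genus-zero pairing is perfect, and then both the isomorphism and the basis statement follow simultaneously. Your closing remark that the nondegeneracy of the compact-type pairing is a ``genuinely hard point'' is slightly off: it is a consequence of the proportionality together with the (elementary, Keel-based) nondegeneracy in genus zero, not an independent obstacle.
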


 This allows us to reduce the computation to the case of genus zero. In the genus zero case, by the work of Keel \cite{Keel} we have a very good understanding of the Chow ring, and we can explicitly compute all the required classes. In Section 4 we prove the combinatorial identity used in the Section 3.

{\bf{Acknowledgement.}} I would like to thank M. Einollah Zade, M. Saghafian and E. Salavati for their collaboration in the proof of Theorem \ref{combthm}. I am also grateful to E. Eftekhary and F. Janda for helpful comments on an earlier version of this paper. This work was partially done when the author visited the Institute for Mathematical Research (FIM) in Zurich and the result was first presented in a lecture in the Einstein series in Algebraic Geometry at Humboldt University supported by the Einstein Stiftung in Berlin.

\section{Kappa and Psi classes}
In this section we explain the relation between the kappa classes and pushforwards of the psi classes. Before stating the results, we need to fix some notations.

\begin{definition}
Let $\pi_{g,n}^{k}:\Mbar{g}{n+k}\to \Mbar{g}{n}$ denote the forgetful map which forgets the last $k$ marked points, and let $\q$ be the multi-set $\Set{q_1,\cdots ,q_k}$. We define:
\begin{itemize}
\item $\psi(\q)=\psi(q_1,\cdots,q_k):=\left(\pi_{g,n}^{k}\right)_*
\left(\prod_{i=1}^k\psi_{n+i}^{q_i+1}\right)$.
\item $\kappa_{\q} = \kappa_{q_1,\cdots,q_k}:=\prod_{i=1}^r\kappa_{q_i} .$
\end{itemize}

\end{definition}

\begin{definition}
Let $\A=\Set{a_1,\cdots,a_k}$ be a multi-set. Given $\sigma\in S_k$ with the cycle decomposition $\sigma = \gamma_1 \ldots \gamma_r$ (including the 1-cycles). We define:
\begin{itemize}
\item $\sigma_i(\mathbf{a}):=\sum_{j\in \gamma_i}a_j $.
\item $\sigma(\A) = \Set{\sigma_1(\A),\cdots,\sigma_r(\A)}$ (as multi-set).
\end{itemize}

\end{definition}

\begin{definition}\label{def1}
Let $\A=\Set{a_1,\cdots,a_k}$ be a multi-set, we denote the set of partitions of $\A$ by $SP(\A)$. Given $\p \in SP(\A)$, we use the following notations.
\begin{itemize}
\item $|\A|=\sum_{j=1}^k a_j$.
\item $SP(\A;l)$ (resp. $SP(\A,l)$) denotes the set of partitions of $\A$ with exactly (resp. at most) $l$ parts.
\item We denote the components of $\p$ by $\p_i$, i.e. $\p=\displaystyle\bigsqcup \p_i$.
\item $\ell(\p)$ denotes the number of components of $\p$. Also we write $\ell(\A)=k$ and from the context it should be clear whether we work with a multi-set or a partition, so there should be no confusions.
\item $|\p| = \Set{|\p_1|,\cdots,|\p_{\ell(\p)}|}$ (as multi-set)
\item By abuse of notation we write $\psi(\p)$ instead of $\psi(|\p|)$, i.e.
$$\psi(\p)= \psi(|\p|)= \left(\pi_{g,n}^{\ell(\p)}\right)_*
\left(\prod_{i=1}^{\ell(\p)}\psi_{n+i}^{|\p_i|+1}\right)  .$$
\end{itemize}
\end{definition}

The relation between the kappa classes and pushforward of the psi classes, due to Faber, is (see \cite{AC}):
$$ \psi(a_1,...,a_{k}) = \sum_{\sigma\in S_k} \kappa_{\sigma(\mathbf{a})}\ .$$

\begin{lemma}(\cite{AC}, \cite[Lemma 11]{Rahul-k})
The subset of $A^d(\Mbar{g}{n})$ defined by
$$\Set{\psi(\A)\ \big|\ \A\in\PP(d)}\ \ \text{and} \ \
\Set{\kappa_{\A}\ \big|\ \A\in\PP(d)}$$
are related by an invertible linear transformation independent of $g$ and $n$.
\end{lemma}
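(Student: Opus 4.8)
The plan is to read everything off Faber's relation
$$\psi(a_1,\dots,a_k)=\sum_{\sigma\in S_k}\kappa_{\sigma(\A)}$$
recalled just above, and to organize the right-hand side by the partition of $d$ that actually appears. Fix $\A=\Set{a_1,\dots,a_k}\in\PP(d)$, with all parts positive. For each $\sigma\in S_k$ the multi-set $\sigma(\A)$ is obtained by grouping the parts of $\A$ into the blocks determined by the cycles of $\sigma$ and summing within each block; hence $\sigma(\A)$ is again a partition of $d$, and it is a coarsening of $\A$ in the refinement order $\preceq$ on $\PP(d)$ (where $\A\preceq\B$ means $\B$ is obtained from $\A$ by merging parts). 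Collecting equal terms, I would rewrite Faber's relation as
$$\psi(\A)=\sum_{\B\succeq\A}c_{\A,\B}\,\kappa_{\B},\qquad c_{\A,\B}:=\#\Set{\sigma\in S_k : \sigma(\A)=\B},$$
where the coefficients $c_{\A,\B}$ are nonnegative integers depending only on the combinatorics of $\A$ and $\B$, and in particular not on $g$ or $n$.

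The key point is that the matrix $C=(c_{\A,\B})_{\A,\B\in\PP(d)}$ is unitriangular with respect to $\preceq$. Indeed, the number of parts of $\sigma(\A)$ equals the number of cycles of $\sigma$, which is $\ell(\A)=k$ if and only if $\sigma$ is the identity; since all parts of $\A$ are positive, any nontrivial merging strictly decreases the number of parts, so $\sigma(\A)=\A$ forces $\sigma=\mathrm{id}$. Thus $c_{\A,\A}=1$, while $c_{\A,\B}=0$ unless $\B\succeq\A$, and moreover $c_{\A,\B}=0$ whenever $\B\neq\A$ has the same number of parts as $\A$ (a coarsening preserving $\ell$ must be trivial). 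Ordering $\PP(d)$ by nonincreasing $\ell$, which refines $\preceq$ to a total order, the matrix $C$ becomes triangular with $1$'s on the diagonal, so $\det C=1$ and $C$ is invertible over $\mathbb{Z}$, a fortiori over $\mathbb{Q}$.

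It then remains only to conclude. In vector form Faber's relation reads $\psi=C\kappa$ for the families $\psi=(\psi(\A))_{\A\in\PP(d)}$ and $\kappa=(\kappa_\A)_{\A\in\PP(d)}$; since $C$ is invertible with purely combinatorial entries, inverting gives $\kappa=C^{-1}\psi$ with $C^{-1}$ again independent of $g$ and $n$, which is exactly the asserted invertible linear transformation. I would stress that no linear independence of the classes themselves is used: the claim is about the explicit change-of-coordinates matrix $C$, which is invertible as a matrix regardless of whether relations happen to hold among the $\psi(\A)$ or the $\kappa_\A$ inside a given $A^d(\Mbar{g}{n})$.

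The only genuinely delicate step is the diagonal computation $c_{\A,\A}=1$: it is what forces the system to be not merely triangular but \emph{unitriangular}, and it relies essentially on the positivity of the parts, so that merging two parts can never reproduce the same multi-set. Everything else is bookkeeping about the refinement order. I expect this to cause no real obstacle, and the same triangular description of $C$ will be convenient later when explicit inverse coefficients are needed.
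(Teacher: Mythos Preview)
Your argument is correct. Faber's formula expresses $\psi(\A)$ as a sum of $\kappa_\B$'s over coarsenings $\B$ of $\A$, and since the number of parts of $\sigma(\A)$ equals the number of cycles of $\sigma$, the only permutation with $\sigma(\A)=\A$ is the identity; hence the change-of-basis matrix is unitriangular in any linear extension of the refinement order and therefore invertible, with entries manifestly independent of $g$ and $n$. One small remark: the positivity of the parts is not actually needed for the diagonal computation $c_{\A,\A}=1$, since the length of the multi-set $\sigma(\A)$ is always the cycle number of $\sigma$; positivity only guarantees that $\sigma(\A)$ again lies in $\PP(d)$.

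The paper takes a different route: rather than arguing abstractly that the matrix is unitriangular, it simply writes down the inverse transformation explicitly as Proposition~\ref{inv-kappa}, namely $\kappa_\A=\sum_{\p\in SP(\A)}(-1)^{\ell(\A)+\ell(\p)}\psi(\p)$, and proves directly (via a Stirling-number identity) that composing with Faber's formula gives the identity. Your approach is cleaner for the bare invertibility statement, but the paper's approach has the advantage of producing the explicit inverse coefficients, which are exactly what is used later in Proposition~\ref{baby-case} and the proof of Theorem~\ref{main-genus-zero}. So while your proof of the lemma is complete, be aware that the explicit formula of Proposition~\ref{inv-kappa} is not a consequence of your triangularity argument and will still need to be established separately for the subsequent computations.
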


The map in one direction is given by the Faber's formula. The inverse map is given by the following proposition.

\begin{proposition}\label{inv-kappa}
$\kappa_{\A}=\sum_{\p\in SP(\A)}(-1)^{n+\ell(\p)}\psi(\p).$
\end{proposition}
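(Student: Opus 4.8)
The plan is to prove the statement by expanding each $\psi(\p)$ on the right-hand side through Faber's formula, collecting the coefficient of every $\kappa_{\q(\A)}$, and showing that it vanishes unless $\q$ is the partition into singletons. Throughout I regard a partition $\p\in SP(\A)$ as a set partition of the index set $\{1,\dots,k\}$, each block recording the corresponding sub-multiset of $\A$; this labelled convention is what makes the counting below match Faber's formula (for a multiset with repeated values the naive unlabelled count would produce the wrong multiplicities). For partitions write $\p\le\q$ when $\q$ is coarser than $\p$, let $\hat 0$ be the partition into singletons, so that $\kappa_{\hat 0}=\kappa_{\A}$, and note that the sign in the statement is $(-1)^{\ell(\A)+\ell(\p)}$.

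First I would rewrite Faber's formula $\psi(a_1,\dots,a_k)=\sum_{\sigma\in S_k}\kappa_{\sigma(\A)}$ in partition language. Grouping the permutations $\sigma$ according to the set partition their cycles induce, and using that exactly $\prod_i(m_i-1)!$ permutations realize a cycle partition with block sizes $m_i$, Faber's identity applied to the block-sums of an arbitrary $\p\in SP(\A)$ becomes
$$\psi(\p)=\sum_{\q\ge\p}\Big(\prod_{C\in\q}\big(\ell(\p^C)-1\big)!\Big)\,\kappa_{\q(\A)},$$
where $\p^C$ denotes the restriction of $\p$ to the elements lying in the block $C$ of $\q$, and $\ell(\p^C)$ is the number of blocks of $\p$ contained in $C$.

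Next I would substitute this expansion into $\sum_{\p}(-1)^{\ell(\A)+\ell(\p)}\psi(\p)$ and read off the coefficient of each $\kappa_{\q(\A)}$. On the interval $[\hat 0,\q]$, which factorizes as $\prod_{C\in\q}\Pi_{|C|}$, both the sign and the Faber weight are multiplicative: using $(-1)^{\ell(\A)+\ell(\p)}=\prod_{C\in\q}(-1)^{|C|+\ell(\p^C)}$, the coefficient of $\kappa_{\q(\A)}$ factors as $\prod_{C\in\q}a_{|C|}$, where
$$a_m=\sum_{j=1}^m(-1)^{m-j}(j-1)!\,\stirling{m}{j}$$
and $\stirling{m}{j}$ is the number of partitions of an $m$-element set into $j$ blocks. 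Thus the entire Proposition reduces to the single combinatorial identity $a_m=\delta_{m,1}$.

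The only nontrivial point — and the step I expect to be the main obstacle — is this identity, which I would establish by the exponential formula. The quantity $a_m$ counts set partitions weighted by $(-1)^{|B|-1}$ on each block $B$ and by $(\#\,\text{blocks}-1)!$ overall; the per-block weight has exponential generating function $1-e^{-x}$ and the outer cyclic weight has generating function $-\log(1-y)$, so the weighted count has generating function $-\log\!\big(1-(1-e^{-x})\big)=-\log e^{-x}=x$. Reading off coefficients gives $a_1=1$ and $a_m=0$ for $m\ge 2$. Consequently $\prod_{C\in\q}a_{|C|}$ equals $1$ exactly when every block of $\q$ is a singleton, i.e. $\q=\hat 0$, and $0$ otherwise, so every term cancels except $\kappa_{\hat 0}=\kappa_{\A}$. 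This simultaneously exhibits the claimed coefficients as the inverse, in the incidence algebra of the partition lattice, of the Faber coefficients, in agreement with the invertibility asserted in the preceding Lemma.
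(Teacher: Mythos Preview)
Your proof is correct and follows essentially the same route as the paper: expand each $\psi(\p)$ by Faber's formula, collect the coefficient of $\kappa_{\q}$, factor it over the blocks of $\q$, and reduce everything to the Stirling identity $\sum_{j}(-1)^{j}(j-1)!\stirling{m}{j}=-\delta_{m,1}$, so that only the singleton partition survives. The only difference is that the paper quotes this Stirling identity from a reference, whereas you supply a short self-contained proof via the exponential formula, and you add the (correct) interpretation of the result as inversion in the incidence algebra of the partition lattice.
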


\begin{proof}

Given $\q\in SP(\A)$, we say that a pair $\left(\p,\sigma\right)$ with $\p\in SP(\A)$ and $\sigma \in S_{\ell(\p)}$ splits $\q$ if $\sigma(|\p|)=\q$. Using Faber's formula we have:

$$\begin{array}{rl}
\sum_{\p\in SP(\A)}(-1)^{n+\ell(\p)}\psi(\p) &= \sum_{\p\in SP(\A)}\sum_{\sigma \in S_{\ell(\p)}}(-1)^{n+\ell(\p)}\kappa_{\sigma(|\p|)} \\
&=(-1)^n \sum_{\q\in SP(\A)} \kappa_{\q} \left( \sum_{(\p,\sigma) \text{ splits }\q}(-1)^{\ell(\p)}\right) .
\end{array}$$

Let $\stirling{n}{k}$ denote the number of partitions of a set of $n$ elements into $k$ parts (the Stirling numbers of the second kind). Given a splitting $\left(\p,\sigma\right)$ of $\q=\amalg_{i=1}^{r}\q_i$, by restriction of $\p$ to each $\q_i$ we obtain an element $\p_i$ of $SP(\q_i)$ and the restriction of $\sigma$ is a permutation with only one cycle of length $\ell(\p_i)$. Using the identity 
$$\sum_{k=1}^{n} (-1)^{k}(k-1)! \stirling{n}{k} =-\delta_1^n$$
\noindent for the  Stirling numbers of the second kind (see \cite{Vanlint}), we have:
$$\begin{array}{rl}
\sum_{(\p,\sigma) \text{ splitting of }\q}(-1)^{\ell(\p)}
&=\prod_{i=1}^{r}\left(\sum_{\p_i \in SP(\q_i)} (-1)^{\ell(\p_i)}(\ell(\p_i)-1)! \right) \\
&=\prod_{i=1}^{r} \left(\sum_{k=1}^{\ell(\q_i)} (-1)^{k}(k-1)! \stirling{\ell(\q_i)}{k} \right) \\
&=(-1)^r \prod_{i=1}^{r} \delta_1^{\ell(\q_i)}.
\end{array}$$

 Hence if for some $i$ we have $\ell(\q_i)>1$ then the coefficient of $\kappa_{\q}$ is zero. Therefore the only term with non-zero contribution is $\q_0=\Set{a_1}\cup\Set{a_2}\cup\cdots\cup\Set{a_n}$. Therefore we have:

 $$\begin{array}{rl}
\sum_{\p\in SP(\A)}(-1)^{n+\ell(\p)}\psi(\p) &= \kappa_{\q_0} \\
&= \kappa(\A)
\end{array}$$

\end{proof}

\begin{example}
\begin{footnotesize}
$$\begin{array}{rl}
\sum_{\p\in SP(\Set{a,b,c})}(-1)^{3+\ell(\p)}\psi(\p)
=&
\psi(a,b,c)
-\psi(a+b,c)-\psi(a+c,b)-\psi(b+c,a)+\psi(a+b+c)
\\
=&
(\kappa_a \kappa_b \kappa_c + \kappa_{a+b}\kappa_{c}+ \kappa_{a+c}\kappa_{b}+\kappa_{b+c}\kappa_{a}+ 2 \kappa_{a+b+c}) 
\\
&
- (\kappa_{a+b}\kappa_{c}+  \kappa_{a+b+c})
- (\kappa_{a+c}\kappa_{b}+  \kappa_{a+b+c})
- (\kappa_{b+c}\kappa_{a}+  \kappa_{a+b+c})
\\
&
+  \kappa_{a+b+c}
\\
=& \kappa_{a+b+c}
\end{array}$$
\end{footnotesize}

\end{example}

\section{Product of kappa classes}
 As we explained in the introduction, Theorem \ref{iota} shows that any relation in $\kappa^*(\Mbar{0}{n})$ is also valid in $\kappa^*(\mathcal{M}_{g,n}^{ct})$. Moreover for $n>1$ all the relations in $\kappa^*(\mathcal{M}_{g,n}^{ct})$ are obtained in this manner. Therefore in order to prove Theorem \ref{main}, it is enough to show that those relations hold in $\kappa^*(\Mbar{0}{n})$. Thus we start by computing the product rule in the kappa ring of $\Mbar{0}{n}$, but we need to fix some more notations.

\begin{definition}\label{def2}
Let $\A=\Set{a_1,\cdots,a_k}$ be a multi-set. 
\begin{itemize}
\item $\A!=\displaystyle\prod_{i=1}^k a_i!$.
\item $\A+\mathbb{1}=\Set{a_i+1\mid i=1,\cdots,k}$.
\item $\Choose{|\A|}{\A}=\Choose{a_1+a_2+\cdots+a_k}{a_1,a_2,\cdots,a_k}$.
\item Given $\p,\q \in SP(\A)$, we write $\q \leq \p$  if $\q$ is a refinement of $\p$. We use the following notations.
\begin{itemize}
\item $[\p:\q]\in SP(|\q|)$ denotes the partition induced from $\p$ on the multi-set $|\q|$. Note that $\ell(\p)=\ell([\p:\q])$.
\item $\q|_{\p_i}\in SP(\p_i)$ denotes the partition induced from $\q$ on the multi-set $\p_i$ (by restriction).
\end{itemize}
\item $\K_{\A}=\K_{a_1,\cdots,a_n}:=\sum_{\p\in SP(\A)}(-1)^{n+\ell(\p)}  {\Choose{|(|\p|+\mathbb{1})|}{|\p|+\mathbb{1}}} . $
\end{itemize}

\end{definition}

In the following proposition we compute the product rule in the top degree of $\kappa^*(\Mbar{0}{n})$.

\begin{proposition}\label{baby-case}
Let $\A=\Set{a_1,\cdots,a_k}$ be a multi-set of integers such that $\sum_i a_i=n-3$.
Then in  $A^{n-3}(\Mbar{0}{n})$ we have:
$$\kappa_{a_1,\cdots,a_l}=\K_{a_1,\cdots,a_l}\kappa_{n-3} . $$

\end{proposition}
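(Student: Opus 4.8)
The plan is to exploit that $\Mbar{0}{n}$ is a smooth projective rational variety of dimension $n-3$, so that its top Chow group $A^{n-3}(\Mbar{0}{n})$ is one-dimensional over $\Q$, with the degree map $\alpha\mapsto\int_{\Mbar{0}{n}}\alpha$ an isomorphism. Every top-degree class is therefore determined by its degree, and the asserted identity reduces to two degree computations: first that $\int_{\Mbar{0}{n}}\kappa_{n-3}=1$, so that $\kappa_{n-3}$ is the degree-one generator, and second that $\int_{\Mbar{0}{n}}\kappa_{\A}=\K_{\A}$. Both reduce to integrals of monomials in $\psi$-classes in genus zero.

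The single substantive input is the classical genus-zero evaluation
$$\int_{\Mbar{0}{N}}\psi_1^{b_1}\cdots\psi_N^{b_N}=\Choose{N-3}{b_1,\cdots,b_N},\qquad \sum_i b_i=N-3.$$
Applying it with $N=n+1$ and the single exponent $b_{n+1}=n-2$, together with $\kappa_{n-3}=\epsilon_*(\psi_{n+1}^{n-2})$ and the fact that proper pushforward preserves the degree of zero-cycles, gives $\int_{\Mbar{0}{n}}\kappa_{n-3}=\int_{\Mbar{0}{n+1}}\psi_{n+1}^{n-2}=1$. For the second computation I would expand $\kappa_{\A}=\prod_i\kappa_{a_i}$ via Proposition \ref{inv-kappa} as $\kappa_{\A}=\sum_{\p\in SP(\A)}(-1)^{\ell(\A)+\ell(\p)}\psi(\p)$. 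A degree count shows each $\psi(\p)=\big(\pi_{0,n}^{\ell(\p)}\big)_*\big(\prod_{i}\psi_{n+i}^{|\p_i|+1}\big)$ is top-dimensional, since $\sum_i(|\p_i|+1)=(n-3)+\ell(\p)=\dim\Mbar{0}{n+\ell(\p)}$, and pushing forward to a point gives
$$\int_{\Mbar{0}{n}}\psi(\p)=\int_{\Mbar{0}{n+\ell(\p)}}\prod_{i=1}^{\ell(\p)}\psi_{n+i}^{|\p_i|+1}=\Choose{|(|\p|+\mathbb{1})|}{|\p|+\mathbb{1}}.$$

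Summing over $\p$ yields $\int_{\Mbar{0}{n}}\kappa_{\A}=\sum_{\p\in SP(\A)}(-1)^{\ell(\A)+\ell(\p)}\Choose{|(|\p|+\mathbb{1})|}{|\p|+\mathbb{1}}$, which is exactly $\K_{\A}$ as in Definition \ref{def2}, and hence $\kappa_{\A}=\K_{\A}\,\kappa_{n-3}$. The conceptual weight sits entirely in the genus-zero $\psi$-integral; the remainder is bookkeeping through the inverse formula of Proposition \ref{inv-kappa} and the degree isomorphism. I expect the only delicate point to be keeping the multinomial indices straight: the $n$ retained marked points carry exponent $0$, so contribute factors $0!=1$ and drop out of the multinomial, while the $\ell(\p)$ forgotten points carry exponents $|\p_i|+1$ summing to $N-3=|(|\p|+\mathbb{1})|$, leaving precisely the coefficient displayed above.
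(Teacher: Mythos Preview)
Your proof is correct and follows essentially the same approach as the paper: both use that $A^{n-3}(\Mbar{0}{n})\cong\Q$, expand $\kappa_{\A}$ via Proposition~\ref{inv-kappa}, and evaluate each $\int_{\Mbar{0}{n}}\psi(\p)$ by the genus-zero multinomial formula for $\psi$-integrals. You are simply more explicit than the paper in verifying $\int_{\Mbar{0}{n}}\kappa_{n-3}=1$ and in tracking the multinomial indices, but the argument is the same.
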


\begin{proof}

Since $A^{n-3}(\Mbar{0}{n})=\mathbb{Q}$, we have to integrate both sides over $\Mbar{0}{n}$. Given $\p=\amalg_{i=1}^r \p_i \in SP(\A)$, we have:

 $$\begin{array}{rl}
\int_{\Mbar{0}{n}} \psi(\p) &= \int_{\Mbar{0}{n+r}} \prod_{i=1}^r\psi_{n+i}^{|\p_i|+1} \\
&=  {\Choose{|\p_1|+\cdots+|\p_r|+r}{|\p_1|+1,\cdots,|\p_r|+1}} \\
&=  {\Choose{|(|\p|+\mathbb{1})|}{|\p|+\mathbb{1}}}  .
\end{array}$$

Therefore the result follows from Proposition~\ref{inv-kappa}.
\end{proof}

%

\begin{definition}
Let $\A=\Set{a_1,\cdots,a_k}$ be a multi-set, $\q\leq\p\in SP(\A)$. 
\begin{eqnarray*}
N_{\A}=N_{a_1,\cdots,a_n}&:=&\sum_{\br \in SP(\A)} (-1)^{n+\ell(\br)} (\ell(\br)-1)!  \frac{\prod_{j=1}^{\ell(\br)}(|\br_j+\mathbb{1}|)!}{\prod (a_i+1)!} 
\\
&=&\sum_{\br \in SP(\A)} (-1)^{n+\ell(\br)}(\ell(\br)-1)!  \prod_{j=1}^{\ell(\br)}\Choose{|\br_j+\mathbb{1}|}{\br_j+\mathbb{1}} .
\\
N_{\p}&:=& \prod_{i=1}^{\ell(\p)} N_{\p_i} .
\end{eqnarray*}

Note that $N_{a_1}=1$. 
\end{definition}

\begin{definition}
A genus zero $n-$pointed \emph{stable weighted graph} is a connected graph $G$ together with a map $m:V(G)\to 2^{\Set{1,\cdots,n}}$ that satisfies the following conditions.
\begin{itemize}
\item The sets $m(v)$ (for $v\in V(G)$) form a partition of $\Set{1,\cdots,n}$. We call $m(v)$ the set of markings at the vertex $v$.
\item For each $v\in V(G)$ we have $d(v):=deg(v)+\sharp m(v) \geq 3$.
\item $H^1(G)=0$, i.e. $G$ is a tree.
\end{itemize}

For any genus zero $n-$pointed stable weighted graph $G$, we have a cycle $[G]\subset \Mbar{0}{n}$ obtained as follows. We take $[G]$ to be the image of the map $$\prod_{v\in V(G)} \Mbar{0}{d(v)} \to \Mbar{0}{n}$$ obtained by gluing along the edges of $G$. 
\end{definition}

\begin{remark}\label{Keel-remark}
In \cite{Keel} Keel proved that $A^{*}(\Mbar{0}{n})$ is generated by cycles of the form $[G]$, and it has perfect pairing. Therefore given a Chow class $\alpha\in A^{*}(\Mbar{0}{n})$, in order to show that $\alpha$ vanishes it is enough to show that the pairing of $\alpha$ against all the classes of the form $[G]$ are zero. Note that if $\alpha$ belongs to the kappa ring, then $\alpha.[G]$ depends only on the dimension of components of $[G]$ and not the distribution of the markings. Thus $\alpha.[G]$ depends only on the dimension sequence of $G$, i.e. $\Set{d(v)-3:v\in V(G)}$. 
\end{remark}

The following theorem describes an additive basis for the kappa ring of $\Mbar{0}{n}$.

\begin{theorem}\cite{Rahul-k}\label{Rahul-additive}
Given $D\in \mathbb{N}$, a $\mathbb{Q}$-basis of $\kappa^D(\Mbar{0}{n})$ is given by
$$\{ \kappa_{\mathbf{p}} \ | \ \mathbf{p} \in P(D,n-D-2)\ \} \  .$$
\end{theorem}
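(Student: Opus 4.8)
The plan is to establish spanning and linear independence at once, via a dimension count driven by the intersection pairing that Keel's theorem provides. By Remark~\ref{Keel-remark}, a class $\alpha\in\kappa^D(\Mbar{0}{n})$ vanishes if and only if $\alpha\cdot[G]=0$ for every boundary stratum $[G]$, and moreover $\alpha\cdot[G]$ depends only on the dimension sequence $\{d(v)-3:v\in V(G)\}$ of $G$. First I would record the elementary bookkeeping: a stratum of dimension $D$ has $n-3-D$ edges and hence $n-2-D$ vertices, so its dimension sequence is a partition of $D$ into at most $n-D-2$ parts, and conversely every $\mathbf{q}\in P(D,n-D-2)$ is realized by some stable weighted graph (a matter of building a suitable tree and distributing the $n$ markings, which is consistent since $\sum_v(q_v+3)=3n-6-2D$). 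Consequently the evaluation map
$$\mathrm{ev}:\kappa^D(\Mbar{0}{n}) \longrightarrow \mathbb{Q}^{P(D,n-D-2)}, \qquad \alpha \longmapsto \big(\alpha\cdot[G_{\mathbf{q}}]\big)_{\mathbf{q}\in P(D,n-D-2)},$$
is well defined and injective, which already yields $\dim_{\mathbb{Q}}\kappa^D(\Mbar{0}{n})\le |P(D,n-D-2)|$. Since the proposed set $\{\kappa_{\mathbf{p}}:\mathbf{p}\in P(D,n-D-2)\}$ has exactly $|P(D,n-D-2)|$ elements, it then suffices to prove these classes are linearly independent: combined with the upper bound this forces them to be a basis, and in particular they automatically span, so I never have to exhibit explicitly the relations eliminating kappa monomials with more than $n-D-2$ factors.

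To prove independence I would show that the square matrix $M_{\mathbf{p},\mathbf{q}}=\kappa_{\mathbf{p}}\cdot[G_{\mathbf{q}}]$, indexed by $\mathbf{p},\mathbf{q}\in P(D,n-D-2)$, is invertible. Writing $\xi\colon\prod_{v}\Mbar{0}{d(v)}\to\Mbar{0}{n}$ for the gluing map with image $[G_{\mathbf{q}}]$, I would use the additivity of kappa classes under boundary restriction, $\xi^*\kappa_a=\sum_v\kappa_a^{(v)}$ (see \cite{AC}), to expand
$$\xi^*\kappa_{\mathbf{p}} \;=\; \prod_{i}\Big(\sum_{v}\kappa_{p_i}^{(v)}\Big) \;=\; \sum_{f}\;\prod_{v}\kappa^{(v)}_{\{\,p_i\,:\,f(i)=v\,\}},$$
the last sum running over all assignments $f$ of the parts of $\mathbf{p}$ to the vertices. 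Integrating over $\prod_v\Mbar{0}{d(v)}$, a dimension count forces each factor to have top degree, so a summand survives only if the parts assigned to a vertex $v$ sum exactly to $q_v=d(v)-3$; each surviving factor is then evaluated by the integral computed in Proposition~\ref{baby-case}.

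The structural heart is the observation that a nonzero contribution to $M_{\mathbf{p},\mathbf{q}}$ forces the parts of $\mathbf{q}$ to be the block-sums of a set-partition of the parts of $\mathbf{p}$, i.e. $\mathbf{q}$ is a coarsening of $\mathbf{p}$, equivalently $\mathbf{p}$ refines $\mathbf{q}$. Ordering $P(D,n-D-2)$ by any linear extension of the refinement order thus makes $M$ triangular. On the diagonal $\mathbf{q}=\mathbf{p}$ only the assignment placing each part of $\mathbf{p}$ on its own vertex can contribute (merging strictly reduces the number of parts), the extra dimension-zero vertices contributing factors $\int_{\Mbar{0}{3}}1=1$; each remaining factor $\int_{\Mbar{0}{p_i+3}}\kappa_{p_i}$ equals $1$ by the computation in the proof of Proposition~\ref{baby-case}. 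Since the number of such assignments is a positive integer, $M_{\mathbf{p},\mathbf{p}}>0$, so $M$ is triangular with positive diagonal and hence invertible, giving the independence of $\{\kappa_{\mathbf{p}}\}$ and completing the argument.

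I expect the main obstacle to be the combinatorial control of this pairing rather than any deep geometry: pinning down the additivity formula $\xi^*\kappa_a=\sum_v\kappa_a^{(v)}$ in exactly the form needed, verifying realizability of each $\mathbf{q}\in P(D,n-D-2)$ as a dimension sequence, and—most delicately—proving the triangularity and the strict positivity of the diagonal cleanly enough that no cancellation can destroy invertibility. The conceptual work is done by Keel's perfect pairing together with the reduction to dimension sequences in Remark~\ref{Keel-remark}; the nondegeneracy of $M$ must be extracted by hand from the refinement structure on partitions.
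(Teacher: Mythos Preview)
The paper does not supply a proof of this statement; it is quoted from \cite{Rahul-k} and used as a black box, so there is no in-paper argument to compare yours against.

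Your proposal is nonetheless correct and fits entirely within the tools the paper already assembles. The injectivity of the evaluation map follows from Remark~\ref{Keel-remark} once every $\mathbf{q}\in P(D,n-D-2)$ is realized as a dimension sequence; your bookkeeping $\sum_v(q_v+3)=3n-6-2D$ together with, say, a path graph on $n-D-2$ vertices (placing $q_v+3-\deg(v)\geq 0$ markings at each vertex) does this. For the pairing matrix, the boundary-restriction formula $\xi^*\kappa_a=\sum_v\kappa_a^{(v)}$ is standard and appears in \cite{AC}; with it your expansion shows that $M_{\mathbf{p},\mathbf{q}}\neq 0$ forces $\mathbf{p}$ to refine $\mathbf{q}$, so $M$ is triangular in any linear extension of the refinement order. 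On the diagonal only bijections from the parts of $\mathbf{p}$ to the positive-dimensional vertices of $G_{\mathbf{p}}$ contribute, each resulting factor is $\int_{\Mbar{0}{p_i+3}}\kappa_{p_i}=1$, and the number of such bijections is the positive integer $\prod_j m_j(\mathbf{p})!$ (with $m_j$ the multiplicity of the $j$-th distinct part). Hence $M$ has strictly positive diagonal, the classes $\kappa_{\mathbf{p}}$ are independent, and combined with the upper bound they form a basis.
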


\begin{theorem}
Let $\A=\Set{a_1,\cdots,a_k}$ be a multi-set of integers, and $n,d\in \mathbb{N}$ be such that $d=n-\sum a_i-2$. In $A^{*}(\Mbar{0}{n})$ we have:
$$\kappa_{a_1,\cdots,a_k}= \sum_{\p\in SP(\A,d)} \left( \sum_{\substack{\q \leq \p \\ \ell(\q)\leq d} } \K_{q} N_{[\p:\q]} \right) \kappa_{\p}.$$

\end{theorem}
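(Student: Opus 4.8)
The plan is to deduce the identity from Keel's perfect pairing. Both $\kappa_\A$ and each $\kappa_{|\p|}$ lie in the kappa ring, so by Remark~\ref{Keel-remark} it suffices to check that $\kappa_\A$ and the proposed right-hand side pair equally against every boundary cycle $[G]$; moreover each such pairing depends only on the dimension sequence $\Set{d(v)-3:v\in V(G)}$ of $G$. Since both classes are homogeneous of degree $|\A|=n-d-2$, the relevant cycles have $\dim[G]=\sum_v(d(v)-3)=d-1$, and the desired Chow identity is equivalent to the family of numerical identities obtained by intersecting with all such $[G]$, indexed by dimension sequences.

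To evaluate a pairing I would use that kappa classes are additive under the gluing map $\xi_G:\prod_{v\in V(G)}\Mbar{0}{d(v)}\to\Mbar{0}{n}$, namely $\xi_G^*\kappa_a=\sum_v\kappa_a^{(v)}$ (\cite{AC}). Expanding a kappa monomial, distributing its factors over the vertices, and using that integration over $\prod_v\Mbar{0}{d(v)}$ splits as a product of integrals over the factors, Proposition~\ref{baby-case} evaluates the contribution of each vertex $v$ as the coefficient $\K$ of the multiset of degrees assigned to $v$. Concretely, $\kappa_\A\cdot[G]$ becomes a sum over distributions of the parts of $\A$ among the vertices, with the degrees at each vertex summing to $d(v)-3$, each distribution contributing the product over blocks $\prod_i\K_{\q_i}=\K_\q$, where $\q\in SP(\A)$ is the partition induced by the distribution; and $\kappa_{|\p|}\cdot[G]$ becomes the analogous sum in which the block-sums $|\p_1|,\dots,|\p_{\ell(\p)}|$ are distributed, a vertex carrying several of them contributing a single factor $\K$ of that multiset of sizes.

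It then remains to match the two expressions for every dimension sequence. The left-hand side distributes the original parts of $\A$ blockwise, whereas the right-hand side distributes the coarser data $|\p|$ and weights by $c_\p=\sum_{\q\le\p,\ \ell(\q)\le d}\K_\q N_{[\p:\q]}$; the factors $N_{[\p:\q]}=\prod_i N_{([\p:\q])_i}$ are designed to re-expand each vertexwise factor $\K$ (of block-sizes) in terms of the finer partitions $\q\le\p$, performing a M\"obius-type inversion over the refinement poset $SP(\A)$ and absorbing the overcounting caused by distinct $\p$ yielding the same monomial $\kappa_{|\p|}$. The main obstacle is precisely this combinatorial identity, which must hold for every dimension sequence. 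I would first verify the top-degree case $|\A|=n-3$, where $d=1$ forces $\q=\p=\Set{\A}$, every factor equals $N_{a_1}=1$, and the claim collapses to Proposition~\ref{baby-case} with $c_{\Set{\A}}=\K_\A$; and then treat the general case, which is the identity proved in Section~4 (Theorem~\ref{combthm}) by setting up the inversion over the two nested partition operations, refinement $\q\le\p$ and restriction $[\p:\q]$.
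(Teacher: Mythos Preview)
Your setup via Keel's perfect pairing and the vertexwise evaluation $\xi_G^*\kappa_a=\sum_v\kappa_a^{(v)}$ matches the paper's. However, there are two issues.

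First, a numerical slip: the cycles $[G]$ to pair against have $\dim[G]=\sum_v(d(v)-3)=|\A|$, not $d-1$; what equals $d-1$ is the number of nodes, so $G$ has exactly $d$ vertices. This matters because it tells you the dimension sequences you must check are the length-$d$ multisets (with zeros allowed) summing to $|\A|$, which is exactly the index set for the basis in Theorem~\ref{Rahul-additive}.

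Second, and more substantively, the paper does \emph{not} attempt to verify the pairing identity for every dimension sequence directly, as you propose. Instead it invokes Theorem~\ref{Rahul-additive}: since $\{\kappa_\p:\p\in SP(\A,d)\}$ is a basis, there are unique coefficients $x_\p$ with $\kappa_\A=\sum x_\p\kappa_{|\p|}$, and one only needs to compute each $x_\p$. Pairing against a single $\p$-cycle and using Proposition~\ref{baby-case} gives a triangular recursion
\[
\K_\p=\sum_{\substack{\q\le\p\\ \ell(\q)\le d}} x_\q\,\K_{[\p:\q]},
\]
which is then solved by induction on $\ell(\p)$. The inductive step reduces not to Theorem~\ref{combthm} itself but to the vanishing statement of Lemma~\ref{NK-lemma},
\[
\sum_{\s\le\p\in SP(\A)} N_{[\p:\s]}\,\K_{|\p|}=0,
\]
whose proof in turn uses Theorem~\ref{combthm}. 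Your last paragraph conflates these: Theorem~\ref{combthm} is a multinomial identity over $SP(\A;k)$, not the ``inversion over two nested partition operations'' you describe, and by itself it does not give the matching you need. Without isolating Lemma~\ref{NK-lemma} (or an equivalent cancellation) and the induction on $\ell(\p)$, the argument has a genuine gap at the step ``match the two expressions for every dimension sequence.''
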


\begin{proof}
Using Remark \ref{Keel-remark} it is enough to check that the integral of both sides against cycles of the form $[G]$ are equal. Note that both sides belong to $A^{n-d-2}(\Mbar{0}{n})$, so they can be paired with cycles with exactly $(n-3)-(n-d-2)=d-1$ nodes. Thus we have to consider cycles that their dimension sequence have exactly $d$ terms.

 If the integral of the left (or right) hand side over $[G]$ is non-zero, then there is a way to put $\kappa_{a_1},\cdots,\kappa_{a_l}$ on irreducible components of $[G]$ such that the dimension of each component is equal to the sum of degree of kappa classes over it. Therefore there is a partition $\p$ of the multi-set $\{a_1,\cdots,a_l\}$ (with at most $d$ parts) such that the multi-set of dimension of irreducible components of $[G]$ is equal to $|\p|$. We call such $G$ a $\p-$graph, and call $[G]$ a $\p-$cycle. Therefore it is enough to check our claim for all the strata of the form $[G]$, where $G$ is a $\p-$graph for some $\p\in SP(\A,d)$.
 
By Theorem \ref{Rahul-additive}, kappa classes of the from $\kappa_{\p}$ for $\p\in SP(\A,d)$ form an additive basis for the kappa ring, therefore for each $\p$ there exist (a unique) $x_{\p}$ such that 
\begin{equation}\label{coefficients}
\kappa_{a_1,\cdots,a_l}=\sum_{\p\in SP(\A,d)} x_{\p} \kappa_{\p(\A)} .
\end{equation}
Hence we have to prove that
\begin{equation}\label{xp}
x_{\p}=\sum_{\substack{\q \leq \p \\ \ell(\q)\leq d} } \K_{q} N_{[\p:\q]} .
\end{equation}

We prove this by induction on $\ell(\p)$. Given a partition $\p$ with $\ell(\p)=d$, we integrate both sides of (\ref{coefficients}) against a $\p-$cycle $[G]$. By Proposition \ref{baby-case} the integral of the left side is $\K_{\p}$, and the integral of the right hand side is $x_{\p}$. Hence we get $x_{\p}=\K_{\p}$, which confirms \ref{xp}. 

Fix $\p \in SP(\A)$ with $\ell(\p)=e<d$. Similarly by Proposition \ref{baby-case} the integral of the left hand side of (\ref{coefficients}) is $\prod\K_{\p_i}=\K_{\p}$, and the integral of the right hand side is given by $$\sum_{\substack{\q \leq \p \\ \ell(\q)\leq d}} x_{\q}\K_{[\p:\q]} .$$ 
Since any partition $\q$ in this sum, except $\p$, has length at least $e+1$ by the induction hypothesis we know that $$x_{\q}=\sum_{\substack{\br \leq \q \\ \ell(\br)\leq d} } \K_{\br} N_{[\q:\br ]} .$$
Thus we obtain 
\begin{eqnarray*}
x_{\p} &=& \K_{\p} - \sum_{\substack{\q < \p \\  \ell(\q)\leq d}} \left\lbrace\sum_{\substack{\br \leq \q \\ \ell(\br)\leq d} } \K_{\br} N_{[\q:\br]} \right\rbrace \K_{[\p:\q]}
\\
&=& \K_{\p} - \sum_{\substack{\br < \p \\ \ell(\br)\leq d}} \K_{\br} \left\lbrace\sum_{\br \leq \q < \p }  N_{[\q:\br ]}\K_{[\p:\q]} \right\rbrace .
\end{eqnarray*}

Let $\p=\p_1\amalg\cdots\amalg\p_e$, in order to simplify the formulas we denote $\q|_{\p_i}$ (resp. $\br|_{\p_i}$) by $\q'_i$ (resp. $\br'_i$), then $\K_{[\p:\q]}=\prod_{i=1}^{e} \K_{|\q'_i|}$ and $N_{[\q:\br ]}=\prod  N_{[\q'_i:\br'_i ]}$. Therefore we obtain:

\begin{eqnarray*}
x_{\p} &=& \K_{\p} - \sum_{\substack{\br < \p \\ \ell(\br)\leq d}} \K_{\br} \left\lbrace\sum_{\br \leq \q < \p } \prod_{i=1}^{e}  N_{[\q'_i:\br'_i ]}\K_{|\q'_i|} \right\rbrace \\
&=&
\K_{\p} - \sum_{\substack{\br < \p \\  \ell(\br)\leq d}} \K_{\br} \left[-N_{[\p:\br]}+\prod_{i=1}^{e}\left( \sum_{\substack{\br'_i \leq \q'_i \\ \q'_i \in SP(\p_i) }}  N_{[\q'_i:\br'_i ]}\K_{|\q'_i|} \right) \right] .
\end{eqnarray*}
In the first line $\q < \p$ is a proper sub partition, and in the second identity we added the term $-N_{[\p:\br]}$ and took the sum over all partitions. Therefore in order to finish the proof, it is enough to check that the terms in the parenthesis are zero, which is the content of the following lemma.
\end{proof}

\begin{lemma}\label{NK-lemma}
Given a multi-set $\A$ and a partition $\s$ of $\A$. We have $$\sum_{\s \leq \p \in SP(\A)}  N_{[\p: \s]}\K_{|\p|}=0 .$$ 
\end{lemma}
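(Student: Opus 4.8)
The plan is to prove this as a purely combinatorial identity, in three stages: a reduction using the partition lattice, an application of the exponential formula relating $N$ to the multinomial weights, and a final multinomial identity carrying the real content. Throughout write $g(\A):=\Choose{|(\A+\mathbb{1})|}{\A+\mathbb{1}}$, so that $\K_{\A}=\sum_{\p\in SP(\A)}(-1)^{\ell(\A)+\ell(\p)}g(|\p|)$ and $N_{\A}=\sum_{\br\in SP(\A)}(-1)^{\ell(\A)+\ell(\br)}(\ell(\br)-1)!\prod_{j}g(\br_j)$. \textbf{Reduction.} The interval $[\s,\hat 1]$ in $SP(\A)$ is canonically isomorphic to $SP(S)$ for $S:=|\s|$, the multi-set of block-sums of $\s$; under this isomorphism a partition $\p\geq\s$ corresponds to $\bar{\p}\in SP(S)$ with $N_{[\p:\s]}=\prod_{B}N_{B}$ and $\K_{|\p|}=\K_{|\bar{\p}|}$. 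Hence the sum in the lemma equals $F(S):=\sum_{\bar{\p}\in SP(S)}\K_{|\bar{\p}|}\prod_{B\in\bar{\p}}N_{B}$, and it suffices to prove $F(S)=0$ for $\ell(S)=\ell(\s)\geq 2$, the configuration that arises in the proof of the theorem above.

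\textbf{Inversion.} The defining formula for $N$ is exactly the connected (cumulant) transform of $g$ on the partition lattice: the weights $(-1)^{\ell(\br)-1}(\ell(\br)-1)!$ are the coefficients of $\log$, so $(-1)^{\ell(\A)-1}N_{\A}=\sum_{\br}(-1)^{\ell(\br)-1}(\ell(\br)-1)!\prod_{j}g(\br_j)$ exhibits $(-1)^{\ell(\A)-1}N_{\A}$ as the logarithm of the ``exponential'' $g$. The Möbius inversion on $SP(\A)$ (the exponential formula) then gives
$$\sum_{\bar{\p}\in SP(\A)}(-1)^{\ell(\bar{\p})}\prod_{B\in\bar{\p}}N_{B}=(-1)^{\ell(\A)}g(\A).$$

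Now expand $\K_{|\bar{\p}|}=\sum_{\rho\geq\bar{\p}}(-1)^{\ell(\bar{\p})+\ell(\rho)}g(|\rho|)$ in $F(S)$, interchange the two summations, and factor the inner sum $\sum_{\bar{\p}\leq\rho}(-1)^{\ell(\bar{\p})}\prod_{B}N_{B}$ over the blocks $R$ of $\rho$. By the inversion each factor equals $(-1)^{\ell(R)}g(R)$, so that
$$F(S)=(-1)^{\ell(S)}\sum_{\rho\in SP(S)}(-1)^{\ell(\rho)}g(|\rho|)\prod_{R\in\rho}g(R).$$
The whole lemma is thereby reduced to the single identity
$$\sum_{\rho\in SP(S)}(-1)^{\ell(\rho)}g(|\rho|)\prod_{R\in\rho}g(R)=0\qquad(\ell(S)\geq 2).$$

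The crux is this last identity, which is where I expect the main difficulty to lie. The obstruction is that $g(|\rho|)=\frac{(|S|+\ell(\rho))!}{\prod_{R}(|R|+1)!}$ is \emph{not} multiplicative over the blocks of $\rho$: at the top level each block $R$ contributes weight $|R|+1$ rather than the weight $|R|+\ell(R)$ its atoms would produce, so the exponential formula cannot be applied verbatim. The route I would take is to linearize the factorial via $(|S|+\ell(\rho))!=\int_{0}^{\infty}e^{-t}t^{|S|+\ell(\rho)}\,dt$; after this substitution the summand becomes multiplicative in the blocks, with weight $-t\,g(R)/(|R|+1)!$ per block, the sum over $\rho$ collapses by the exponential formula to an explicit exponential in $t$, and the remaining integral is evaluated in closed form and seen to vanish precisely when $\ell(S)\geq 2$. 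Equivalent packagings are Lagrange inversion for the associated generating series, in which the ``$+1$ per block'' is the signature of the tree substitution, or a sign-reversing involution on the pairs consisting of a shuffle inside each block together with a shuffle of the reduced blocks. Reconciling the two gradings—atom weight $|R|+\ell(R)$ against reduced weight $|R|+1$—is the heart of the argument and is the step I expect to require the dedicated combinatorial work of Section 4.
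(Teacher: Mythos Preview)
Your reduction is correct and coincides with the paper's: both arrive at the identity
\[
\sum_{\rho\in SP(S)}(-1)^{\ell(\rho)}\,g(|\rho|)\prod_{R\in\rho}g(R)=0\qquad(\ell(S)\ge 2),
\]
and your cumulant/exponential-formula inversion is just a cleaner repackaging of the paper's explicit permutation-sign cancellation over the middle partition $\p$ in the chain $\br\le\p\le\q$.

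Where you diverge is in the final step, and here you are making things harder than necessary. You flag the non-multiplicativity of $g(|\rho|)=(|S|+\ell(\rho))!/\prod_R(|R|+1)!$ as the crux and propose a Laplace integral or Lagrange inversion to deal with it. The paper's route is much more direct: group the sum by $k=\ell(\rho)$ and apply Theorem~\ref{combthm}, which is precisely the statement
\[
\sum_{\rho\in SP(S;k)} g(|\rho|)\prod_{R\in\rho}g(R)=\binom{n-1}{k-1}\,g(S),\qquad n=\ell(S).
\]
The whole expression then collapses to $g(S)\sum_{k=1}^{n}(-1)^{k}\binom{n-1}{k-1}$, which vanishes for $n\ge 2$. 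So the ``grading mismatch'' you worry about---atom weight $|R|+\ell(R)$ versus reduced block weight $|R|+1$---is exactly what the Pr\"ufer-code identity of Section~4 is designed to absorb, and once you invoke it the vanishing is a one-line alternating binomial sum. Your outline is sound, but the last paragraph should simply cite Theorem~\ref{combthm} rather than reach for an integral representation.
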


\begin{proof}
We have $$\sum_{\s \leq \p \in SP(\A)}  N_{[\p: \s]}\K_{|\p|}= \sum_{\p \in SP(|\s|)}  N_{\p}\K_{|\p|} .$$
Hence it is enough to check that the term in the left is zero. We denote the multi-set $\s$ by $\B$, and $b:=\sharp\B$. 

\begin{eqnarray*}
\sum_{\p  \in SP(\B)}  N_{\p}\K_{|\p|} & = & \sum_{\p \in SP(\B)} \K_{|\p|} \prod_{i=1}^{\ell(\p)} N_{\p_i}
\\
& = & \sum_{\br\leq \p \leq \q \in SP(\B)}  (-1)^{\ell(\p)+\ell(\q)} \Choose{|\q +\mathbb{1}|}{\q +\mathbb{1}} \cdot
\\
& & 
\hspace{10mm}  (-1)^{b+\ell(\br)} \prod_{i=1}^{\ell(\p)} \prod_{j=1}^{\ell(\br|_{\p_i})} \Choose{|\br_i^j +\mathbb{1}|}{\br_i^j +\mathbb{1}} (\ell(\br|_{\p_i})-1)!
\\
& &  ( \text{where } \br|_{\p_i}=\coprod_{j=1}^{\ell(\br|_{\p_i})}\br_i^j \text{ is the partition induced by }\br \text{ on the set } \p_i )
\end{eqnarray*}

The data of a triple $\br\leq \p \leq \q \in SP(\B)$ is equivalent to the data of $\br\leq  \q \in SP(\B)$ plus the data of a partition $\p':=[\p:\br] \leq [\q:\br] \in SP(|\br|)$. The data of a partition $\p':=[\p:\br] \leq [\q:\br] \in SP(|\br|)$ is equivalent to the following. 

For each $1 \leq l \leq \ell(\q)=\ell([\q:\br])$, the data of a partition $$\p'_l=\coprod_{j=1}^{\ell(\p'_l)}{\p'_l}^j$$ of the multi-set $[\q:\br]_l$. 

Note that any multi-set $\p_i$ induces a unique ${\p'_l}^j$ and we have 

$$\begin{array}{rl}
\ell(\br|_{\p_i})&= \text{ number of components or }\br \text{ in } \p_i \\
&= \text{ number of elements of }{\p'_l}^j \text{ as a partition of the multi-set }|\br| \\
&= \ell({\p'_l}^j)
\end{array}$$
Also if we vary $i$ and $j$, $\br_i^j$ runs over all components of $\br$, hence
$$ \prod_{i=1}^{\ell(\p)} \prod_{j=1}^{\ell(\br|_{\p_i})} \Choose{|\br_i^j +\mathbb{1}|}{\br_i^j +\mathbb{1}}= \prod_{i=1}^{\ell(\br|)} \Choose{|\br_i +\mathbb{1}|}{\br_i +\mathbb{1}} . $$

Therefore

\begin{eqnarray*}
\sum_{\p  \in SP(\B)}  N_{\p}\K_{|\p|}
& = & \sum_{\br\leq \q \in SP(\B)}  (-1)^{\ell(\q)+b+\ell(\br)}  \Choose{|\q +\mathbb{1}|}{\q +\mathbb{1}}
 \prod_{i=1}^{\ell(\br|)} \Choose{|\br_i +\mathbb{1}|}{\br_i +\mathbb{1}} \cdot
\\
& & \hspace{20mm} \prod_{l=1}^{\ell(\q)} \left( \sum_{\p'_l \in SP([\q:\br]_l)} (-1)^{\ell(\p'_l)} \prod_{j=1}^{\ell(\p'_l)} (\ell({\p'_l}^j)-1)! \right)
\end{eqnarray*}

Note that for a partition $\T$ of a set $X$ there are exactly $\prod_{j=1}^{\ell(\T)} (\ell(\T_j)-1)!$ permutations $\sigma$ of $X$ such that the partition of $X$ obtained from the cycle decomposition of $\sigma$ is equal to $\T$.  Hence we have
$$\sum_{\T \text{ a partition of } X} (-1)^{\ell(\T)} \prod_{j=1}^{\ell(\T)} (\ell(\T_j)-1)!= \sum_{\sigma \in S_{|X|}} (-1)^{|\sigma|}.$$

Therefore if $\br\neq \q$ then the term $$\prod_{l=1}^{|\q|} \left( \sum_{\p_l \text{ is a partition of } s(\br_l) } (-1)^{|\p_l|} \prod_{j=1}^{|\p_l|} (|\p_l^j|-1)! \right)$$
vanishes, and if $\br=\q$ we get $(-1)^{\ell(\q)}$. So
\begin{eqnarray*}
\sum_{\p}  N_{\p}\K_{s(\p)} & = &  \sum_{\q }  (-1)^{\ell(\q) +b} \Choose{|\q +\mathbb{1}|}{\q +\mathbb{1}}
  \prod_{i=1}^{|\q|} \Choose{|\q_i +\mathbb{1}|}{\q_i +\mathbb{1}} 
\\
\text{(By Theorem \ref{combthm})}  & = & \left[\sum_{k=1}^{b} (-1)^{k+1} \Choose{n-1}{k-1} \right] \Choose{|B+\mathbb{1}|}{B+\mathbb{1}} (-1)^{|B|}
\\
& = & 0 .
\end{eqnarray*}
\end{proof}

\begin{theorem}\label{main-genus-zero}
Let $\A=\Set{a_1,\cdots,a_k}$ be a multi-set of integers such that $d=n-|\A|-2$. In $A^{*}(\Mbar{0}{n})$ we have:
$$\kappa_{a_1,\cdots,a_l}=\sum_{\p\in SP(\A,d)} x_{\p} \kappa_{\p(\A)}$$
where 
$$x_{\p}=\sum_{\T \leq \br \leq \p }  \frac{(-1)^{\ell(\A)+\ell(\T)+\ell(\br)}}{(|\T|+\mathbb{1})!} \prod_{j=1}^{\ell(\br)} (|\br_j+\mathbb{1}|)! \prod_{l=1}^{\ell(\p)}(\ell(\br|_{\p_l})-1)! \cdot
 (-1)^M \Choose{\ell(\T)-\ell(\br)}{M-\ell(\br)}$$ and $M=\min \{ \ell(\T),d \}$.
\end{theorem}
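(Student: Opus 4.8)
The plan is to assemble the final closed formula for $x_{\p}$ by combining the recursive description already obtained in the proof of Theorem~3.7 with the definitions of $\K$ and $N$, and then cleaning up the nested sums. From the theorem we have the two-layer expression
\begin{equation*}
x_{\p}=\sum_{\substack{\q \leq \p \\ \ell(\q)\leq d}} \K_{\q}\, N_{[\p:\q]},
\end{equation*}
so the first step is to unfold both factors using Definition~3.4 (for $N$) and the definition of $\K_{\A}$ in Definition~3.2. Writing $\K_{\q}$ as a sum over partitions $\T\leq\q$ of multinomial coefficients $\Choose{|(|\T|+\mathbb{1})|}{|\T|+\mathbb{1}}$ with sign $(-1)^{\ell(\q)+\ell(\T)}$, and writing $N_{[\p:\q]}=\prod_i N_{\q|_{\p_i}}$ with each $N$-factor itself a sum over $\br|_{\p_i}\leq\q|_{\p_i}$, produces a triple sum over chains $\T\leq\br\leq\q\leq\p$. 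I would then reorganize this so that $\T$ and $\br$ become the outer summation variables and $\q$ the inner one.

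The key simplification is that, after fixing $\T\leq\br\leq\p$, the only remaining dependence on $\q$ sits in the sign $(-1)^{\ell(\q)}$ together with the constraint $\ell(\q)\leq d$ and the binomial factors produced by the $(\ell(\q|_{\p_i})-1)!$ terms from $N$. Summing over the intermediate $\q$ with $\br\leq\q\leq\p$ subject to $\ell(\q)\leq d$ is exactly a signed count of partitions refining in a prescribed way, and this is what collapses into the single binomial $\Choose{\ell(\T)-\ell(\br)}{M-\ell(\br)}$ with the accompanying sign $(-1)^{M}$, where $M=\min\{\ell(\T),d\}$. The truncation at $d$ parts is what forces the $\min$ to appear: without the constraint $\ell(\q)\leq d$ the alternating sum would vanish, so the surviving contribution is precisely the boundary correction coming from the cutoff. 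This inner sum is the one place where the genus-zero dimension bound $d=n-|\A|-2$ enters nontrivially, and handling it is the heart of the computation.

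The remaining factors are then assembled by bookkeeping: the product $\prod_{j=1}^{\ell(\br)}(|\br_j+\mathbb{1}|)!$ and the normalizing factor $\tfrac{1}{(|\T|+\mathbb{1})!}$ come from rewriting the multinomial coefficients attached to $\T$ and $\br$ in factorial form, using $\Choose{|\A|}{\A}=|\A|!/\A!$ and the definition of $\A+\mathbb{1}$, while the product $\prod_{l=1}^{\ell(\p)}(\ell(\br|_{\p_l})-1)!$ is what is left of the $N$-factors once their sign and multinomial parts have been absorbed into the $\br$-summation. Collecting the signs $(-1)^{\ell(\A)+\ell(\T)+\ell(\br)}$ requires tracking the overall parity through the chain; since $\ell(\A)=k$ is fixed, the signs coming from $\K_{\q}$ and from each $N$-factor combine cleanly once the $\q$-sum has been evaluated.

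I expect the main obstacle to be the inner summation over $\q$ with the truncation $\ell(\q)\leq d$. The unconstrained version is a standard Möbius-type alternating identity over the partition lattice of the interval $[\br,\p]$, but the cutoff at $d$ parts breaks the telescoping and is what makes the final answer a genuinely nontrivial binomial rather than a delta function. Verifying that the surviving terms organize into exactly $\Choose{\ell(\T)-\ell(\br)}{M-\ell(\br)}(-1)^{M}$ — and in particular checking the edge cases $\ell(\T)\leq d$ versus $\ell(\T)>d$ separately — is the delicate step; everything before and after it is careful but essentially mechanical repackaging of the already-established recursion.
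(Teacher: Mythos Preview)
Your outline has the right starting point (the expression $x_{\p}=\sum_{\q\leq\p,\ \ell(\q)\leq d}\K_{\q}N_{[\p:\q]}$ from Theorem~3.7) but two of the subsequent steps are not right, and one of them hides the actual content of the proof.

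First, the chain is ordered incorrectly. Expanding $\K_{\q}=\prod_i\K_{\q_i}$ gives a sum over refinements $\T\leq\q$, but expanding $N_{[\p:\q]}$ gives a sum over partitions of the multi-set $|\q|_{\p_i}|$, i.e.\ over \emph{coarsenings} $\br$ with $\q\leq\br\leq\p$. So the unfolded sum runs over $\T\leq\q\leq\br\leq\p$, not $\T\leq\br\leq\q\leq\p$; the $\T\leq\br$ in the final statement only appears after a later role-switch between the $\T$- and $\br$-type partitions.

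Second, and more seriously, your claim that ``the only remaining dependence on $\q$ sits in the sign $(-1)^{\ell(\q)}$'' is false, and this is exactly where the real work is. After unfolding, the summand still carries the multinomial factors $\Choose{|\br_j|+d_j}{|\q'_j|+\mathbb{1}}$ and $\prod_i\Choose{||\T_j^i|+\mathbb{1}|}{|\T_j^i|+\mathbb{1}}$, which depend on the actual block sums of $\q$, not merely on $\ell(\q)$. Collapsing the $\q$-sum is therefore not a M\"obius/alternating identity on the partition lattice; it requires the combinatorial identity of Theorem~\ref{combthm},
\[
\sum_{\p\in SP(\A;k)}\Choose{||\p|+\mathbb{1}|}{|\p|+\mathbb{1}}\prod_{i=1}^k\Choose{|\p_i+\mathbb{1}|}{\p_i+\mathbb{1}}=\Choose{n-1}{k-1}\Choose{|\A+\mathbb{1}|}{\A+\mathbb{1}},
\]
applied inside each $\br$-block. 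This is what converts the genuinely value-dependent multinomials into the purely numerical binomial $\Choose{\ell(\T_j)-1}{d_j-1}$. Only after this step does a Vandermonde convolution over the $d_j$ give $\Choose{\ell(\T)-\ell(\br)}{k-\ell(\br)}$, and then the truncated alternating sum $\sum_{k\leq d}(-1)^k\Choose{\ell(\T)-\ell(\br)}{k-\ell(\br)}$ produces the $(-1)^M\Choose{\ell(\T)-\ell(\br)}{M-\ell(\br)}$ you were aiming for. Your description of that last truncation is correct, but it is not ``the delicate step''; the delicate step is the application of Theorem~\ref{combthm}, which your proposal does not mention.
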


\begin{proof}

We define $C_k(\A) $ as follows.  
\begin{eqnarray*}
C_k(\A) & = & \sum_{ \ell(\q) =k } \K_{\q} N_{|\q|}
\\
&=& \sum_{\substack{\q \in SP(\A;k)  \\ \br \in SP(|\q|)}} \left( \prod_{i=1}^{k} \K_{\q_i} \right)  (-1)^{k+\ell(\br)} \left( \prod_{j=1}^{\ell(\br)}\Choose{|\br_j+\mathbb{1}|}{\br_j+\mathbb{1}} \right) (\ell(\br)-1)! 
\end{eqnarray*}

The data of $\q \in SP(\A;k) \text{ and } \br \in SP(|\q|)$ is equivalent to the data of $\q \leq \br \in SP(\A)$ with $\q \in SP(\A;k)$, and this is equivalent to the data of $\br \in SP(\A)$ plus the following. For each $1 \leq j \leq \ell(\br)$ a partition 
$$\q'_j:=\q|_{\br_j}=\coprod_{i=1}^{\ell(\q'_j)}{\q'_j}^i$$
 of the multi-set $\br_j$, such that $\sum \ell(\q'_j)=k$. For such pairs $(\q,\br)$ and $(\br,\Set{\q'_j}_j)$, we have
 $$\Choose{|\br_j+\mathbb{1}|}{\br_j+\mathbb{1}}=\Choose{|\br_j|+d_j}{|\q'_j|+\mathbb{1}} ,$$
  and as in the proof of Lemma \ref{NK-lemma} we have $\prod_i \K_{\q_i}=\prod_{i,j} \K_{{\q'_j}^i}$.

\begin{eqnarray*}
C_k(\A) & = & 
\sum_{\substack{\br \in SP(\A) \\ (d_1,\cdots,d_{\ell(\br)}) \\  \sum d_j=k}}
(-1)^{k+\ell(\br)}  (\ell(\br)-1)!
\prod_{j=1}^{\ell(\br)}  \left( \sum_{\q'_j \in SP(\br_j;d_j) } \Choose{|\br_j|+d_j}{|\q'_j|+\mathbb{1}}
\prod_{i=1}^{d_j} \K_{{\q'_j}^i} \right)
\\
& = &  
\sum_{\substack{\br \in SP(\A) \\ (d_1,\cdots,d_{\ell(\br)}) \\  \sum d_j=k}}
(-1)^{k+\ell(\br)}  (\ell(\br)-1)! \cdot
\\
& & \hspace{10mm}
\prod_{j=1}^{\ell(\br)}  \left( \sum_{\substack{\q'_j \in SP(\br_j;d_j) \\ \T_j^i\in SP({\q'_j}^i) }} \Choose{|\br_j|+d_j}{|\q'_j|+\mathbb{1}}
 \left( \prod_{i=1}^{d_j} (-1)^{\ell({\q'_j}^i)+\ell(t_j^i)} \Choose{||\T_j^i|+\mathbb{1}|}{|\T_j^i|+\mathbb{1}}   \right) \right)
\\
 & = &  
\sum_{\substack{\br \in SP(\A) \\ (d_1,\cdots,d_{\ell(\br)}) \\  \sum d_j=k}}
(-1)^{k+\ell(\br)}  (\ell(\br)-1)! \cdot
\\
& & \hspace{10mm}
\prod_{j=1}^{\ell(\br)}  \left( \sum_{\substack{\T_j \in SP(\br_j) \\ \tq_j\in SP(|\T_j|;d_j) }} (-1)^{\ell(\br_j)+\ell(t_j)}  \Choose{|\br_j|+d_j}{|\tq_j|+\mathbb{1}}
 \left( \prod_{i=1}^{d_j} \Choose{||\tq_j^i|+\mathbb{1}|}{|\tq_j^i|+\mathbb{1}}   \right) \right)
\\
\end{eqnarray*}

In the second line we expanded $\K_{{\q'_j}^i}$, and in the third line we switch the role of ${\q'_j}^i$ and $\T_j^i$ which by now is standard. In the fourth line we used Theorem \ref{combthm}. 

Now we switch the role of $\T$ and $\br$, and take the sum over $\T$ and $\tbr$. We use the fact that $||\T_i|+\mathbb{1}|$ for a partition $\T_i\in SP(\br_i)$ is equal to $|\tbr_i+\mathbb{1}|$ for the corresponding component of $\tbr\in SP(\T)$ (as we explained in the proof of lemma \ref{NK-lemma}), and also $(|\T|+\mathbb{1})!=\prod (|\T_i|+\mathbb{1})!$.

\begin{eqnarray*}
C_k(\A)  
& = &  
\sum_{\substack{\br \in SP(\A) \\ (d_1,\cdots,d_{\ell(\br)}) \\  \sum d_j=k}}
(-1)^{k+\ell(\br)}  (\ell(\br)-1)! \cdot
\\
& & \hspace{10mm}
\prod_{j=1}^{\ell(\br)}  \left( \sum_{\T_j \in SP(\br_j) } (-1)^{\ell(\br_j)+\ell(t_j)}  \Choose{||\T_j|+\mathbb{1}|}{|\T_j|+\mathbb{1}}
\Choose{\ell(\T_j)-1}{d_j-1} \right)
\\
& = &  \sum_{\T \in SP(\A)} \frac{(-1)^{\ell(\A)+k+|\T|}} {(|\T|+\mathbb{1})!} \cdot 
\\
& & \left( \sum_{\tbr \in SP(|\T|)}  (-1)^{\ell(\tbr)} (\ell(\tbr)-1)! \left( \prod_{j=1}^{\ell(\tbr)} (|\tbr_j+\mathbb{1}|)! \right) \left[ \sum_{\sum d_j=k} \prod_{j=1}^{\ell(\tbr)} \Choose{\ell(\tbr_j)-1}{d_j-1} \right] \right)
\\
& = &  \sum_{\T \in SP(\A)} \frac{(-1)^{\ell(\A)+k+|\T|}} {(|\T|+\mathbb{1})!} \cdot
\\
& & \left( \sum_{\tbr \in SP(|\T|)}  (-1)^{\ell(\tbr)} (\ell(\tbr)-1)! \left( \prod_{j=1}^{\ell(\tbr)} (|\tbr_j+\mathbb{1}|)! \right) \Choose{\ell(\T)-\ell(\tbr)}{k-\ell(\tbr)} \right)
\end{eqnarray*}
Therefore:
\begin{eqnarray*}
x_{\p} & = & \sum_{\substack{\q \leq \p \\ \ell(\q)\leq d} } \K_{q} N_{[\p:\q]} 
\\
& = &\sum_{\substack{(k_1,\cdots,k_{\ell(\p)}) \\ \sum_{j=1}^{\ell(\p)} k_j \leq d}} \prod_{i=1}^{\ell(\p)}\left( \sum_{\substack{\q \in SP(\p_i) \\ \ell(\q'_i)=k_i } } \K_{\q'_i} N_{|\q'_i|} \right)
\\
& = & \prod_{\sum_{i=1}^{\ell(\p)} k_i\leq d}  C_{k_i}(\p_i)
\\
& = & \prod_{\sum_{i=1}^{\ell(\p)} k_i\leq d}  \sum_{\T_i \in SP(\p_i)} \frac{(-1)^{\ell(\p_i)+k_i+|\T_i|}} {(|\T_i|+\mathbb{1})!} \cdot 
\\
& & \left( \sum_{\tbr_i \in SP(|\T_i|)}  (-1)^{\ell(\tbr_i)} (\ell(\tbr_i)-1)! \left( \prod_{j=1}^{\ell(\tbr_i)} (|\tbr_i^j+\mathbb{1}|)! \right) \Choose{\ell(\T_i)-\ell(\tbr_i)}{k_i-\ell(\tbr_i)} \right) 
\end{eqnarray*}

By reordering the terms in the expression and using Theorem \ref{combthm}, we get the required result.

\begin{eqnarray*}
x_{\p}
& = &  \sum_{{\substack{\T \leq \br \leq \p \\ k\leq d} }}  \frac{(-1)^{\ell(\A)+k+\ell(\T)+\ell(\br)}}{(|\T|+\mathbb{1})!} \prod_{j=1}^{\ell(\br)} (|\br_j+\mathbb{1}|)! \prod_{l=1}^{\ell(\p)}(\ell(\br|_{\p_l})-1)! \cdot
\\
& & \hspace{20mm} \left( \sum_{\sum k_i =k} \prod_{i=1}^{\ell(\p)} \Choose{\ell(\T|_{\p_i})-\ell(\br|_{\p_l})}{ k_i-\ell(\br|_{\p_l})} \right)
\\
& = &  \sum_{\T \leq \br \leq \p }  \frac{(-1)^{\ell(\A)+\ell(\T)+\ell(\br)}}{(|\T|+\mathbb{1})!} \prod_{j=1}^{\ell(\br)} (|\br_j+\mathbb{1}|)! \prod_{l=1}^{\ell(\p)}(\ell(\br|_{\p_l})-1)! \cdot
\\
& & \hspace{20mm} \left( \sum_{\sum k =\ell(\br)}^{d} (-1)^k \Choose{\ell(\T)-\ell(\br)}{k-\ell(\br)}  \right)
\\
& = &  \sum_{\T \leq \br \leq \p }  \frac{(-1)^{\ell(\A)+\ell(\T)+\ell(\br)}}{(|\T|+\mathbb{1})!} \prod_{j=1}^{\ell(\br)} (|\br_j+\mathbb{1}|)! \prod_{l=1}^{\ell(\p)}(\ell(\br|_{\p_l})-1)! \cdot
 (-1)^M \Choose{\ell(\T)-\ell(\br)}{M-\ell(\br)} 
\\
& & \text{where }M=\min \{ \ell(\T),d \}.
\end{eqnarray*}

\end{proof}

\begin{proof}[Proof of Theorem \ref{main}]
As we mentioned in the introduction, Theorem \ref{iota} implies that the expression of Theorem \ref{main-genus-zero} hold in $\kappa^*(\mathcal{M}_{g,n}^{ct})$ for any genus $g$. This completes the proof of Theorem \ref{main}. 

\end{proof}

\begin{example}
We consider two special cases of Theorem \ref{main}. In both cases, in order to get a non-zero contribution from $ \Choose{\ell(\T)-\ell(\br)}{M-\ell(\br)}$, we have to consider special $\br$ and $\T$.

\begin{itemize}
\item $d=1$ so $M=1$. In order to get a non-zero contribution we must have $\ell(\br)=1$. There we obtain:
$$x_{\p}=\sum_{\T  \in SP(\A)}  \frac{(-1)^{\ell(\A)+\ell(\T)}}{(|\T|+\mathbb{1})!} (|\A+\mathbb{1}|)!  ,$$ that is the result of Proposition \ref{baby-case}.
\item $d=n$ which implies $M=\ell(\T)$. In order to get a non-zero contribution we must have $\br=\T$. So 
$$x_{\p}=\sum_{\br \leq  \p \in SP(\A)}  (-1)^{\ell(\A)+\ell(\br)} \prod_{l=1}^{\ell(\p)}(\ell(\br|_{\p_l})-1)! , $$ which vanishes unless $\ell(\p_i)=1$ for each $i$. 
\end{itemize}
\end{example}

\section{Combinatorial identity}
This section contains the statement and proof of the combinatorial identity (Theorem \ref{combthm}) that we used in the previous section. The methods are completely combinatorial and are independent from the rest of the paper. 

\begin{lemma}\label{comblemma}
Let $\A=\Set{a_1,\cdots,a_n}$ be a multi-set of integers. 
$$\sum_{\p \in SP(\A;k) } \prod_{i=1}^k |\p_i|^{\ell(\p_i)-1}= {\Choose{n-1}{k-1}} (a_1+\cdots+a_n)^{n-k} .$$
\end{lemma}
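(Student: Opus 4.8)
The plan is to read both sides as weighted enumerations of rooted forests and to deduce the identity from the (weighted) Cayley formula. Throughout I would treat $a_1,\dots,a_n$ as formal variables and interpret $SP(\A;k)$ as the set of partitions of the \emph{labelled} index set $\{1,\dots,n\}$ into exactly $k$ nonempty blocks; both sides are then homogeneous of degree $n-k$ in the $a_j$, so it suffices to prove the resulting polynomial identity and specialize. For a block $B$ I write $w(B)=\sum_{j\in B}a_j$, so the summand is $|\p_i|^{\ell(\p_i)-1}=w(\p_i)^{\#\p_i-1}$, where $\#\p_i=\ell(\p_i)$ is the number of elements of the block.

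First I would record the weighted Pr\"ufer identity
$$\sum_{T\text{ tree on }B}\ \prod_{v\in B}a_v^{\deg_T(v)-1}=w(B)^{\#B-2},$$
which follows at once from the Pr\"ufer bijection: in the Pr\"ufer word, of length $\#B-2$, each vertex $v$ occurs exactly $\deg_T(v)-1$ times, and the word ranges freely over $B^{\#B-2}$. Passing to rooted trees, if $T$ is rooted at $r$ then the number of children is $c(v)=\deg_T(v)-1$ for $v\ne r$ and $c(r)=\deg_T(r)$, so $\prod_v a_v^{c(v)}=a_r\prod_v a_v^{\deg_T(v)-1}$; summing over all trees with the root fixed to be $r$ and then over $r\in B$ turns the display into $\sum_{T,\,r}\prod_{v\in B}a_v^{c(v)}=w(B)^{\#B-1}$. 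Reading this with the root fixed shows that $w(B)^{\#B-1}=|\p_i|^{\ell(\p_i)-1}$ is exactly the generating function $\sum_{T}\prod_{v\in B}a_v^{c_T(v)}$ for rooted trees on $B$, weighted by the number of children of each vertex.

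Substituting this block by block, the left-hand side becomes $\sum_F\prod_{v=1}^n a_v^{c_F(v)}$, where $F$ runs over rooted forests on $\{1,\dots,n\}$ with exactly $k$ trees and $c_F(v)$ counts the children of $v$. I would then adjoin a virtual root $0$ joined to the $k$ roots of $F$: this is a bijection onto the trees on $\{0,1,\dots,n\}$ rooted at $0$ with $c(0)=k$, and it does not change $\prod_{v\ge1}a_v^{c(v)}$. Marking the children of $0$ by a new variable $a_0$ and summing over $k$, the rooted form of the Cayley formula for $B=\{0,1,\dots,n\}$ with root fixed at $0$ gives
$$\sum_{k\ge1}a_0^{\,k}\Bigl(\sum_{\p\in SP(\A;k)}\prod_{i}|\p_i|^{\ell(\p_i)-1}\Bigr)=\sum_{\substack{T\text{ on }\{0,\dots,n\}\\ \text{rooted at }0}}\ \prod_{v=0}^{n}a_v^{c_T(v)}=a_0\,(a_0+a_1+\cdots+a_n)^{\,n-1}.$$
Extracting the coefficient of $a_0^{\,k}$ on both ends yields $\binom{n-1}{k-1}(a_1+\cdots+a_n)^{\,n-k}$, which is the asserted value.

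The genuinely substantive input is the weighted Cayley formula; everything else is bookkeeping, namely checking that the block-by-block substitution and the virtual-root bijection both preserve the monomial weight $\prod_v a_v^{c(v)}$. The one convention to keep in mind is that $SP(\A;k)$ must be taken as partitions of the labelled set of indices (so that equal values $a_i=a_j$ are still distinguished); this is what produces the factor $\binom{n-1}{k-1}$, as one already sees on the tiny example $\A=\{1,1,1\}$, $k=2$, where the three set partitions give $3\cdot2=6=\binom{2}{1}\cdot3$. I expect the main obstacle to be stating the weighted Cayley/Pr\"ufer step with the correct normalization of degrees and of the root, after which the coefficient extraction is immediate.
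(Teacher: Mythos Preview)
Your proof is correct and is essentially the same argument as the paper's: both introduce an auxiliary vertex (your virtual root $0$, the paper's $v_0$ with label $a_0=1$) and use the weighted Pr\"ufer/Cayley identity to double-count the resulting spanning trees, with the decomposition around the auxiliary vertex producing the set partition into $k$ blocks. The only cosmetic difference is that you package the computation via a generating function in $a_0$ and extract the coefficient of $a_0^{\,k}$, whereas the paper sets $a_0=1$ from the start and reads off the $\binom{n-1}{k-1}(a_1+\cdots+a_n)^{n-k}$ factor directly from the positions of $a_0$ in the Pr\"ufer code.
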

\begin{proof}
Let $K_{n+1}$ be the complete graph on $n+1$ vertices with vertices labeled by $1=a_0,a_1,\cdots,a_n$. For a spanning tree $T\subset K_{n+1}$ we denote the Pr\"{u}fer code of $T$ by $P(T)$, and 
$$p(T):=\text{ the product of all the entries of }P(T)$$
 which we call the Pr\"{u}fer function of $T$. For definition and basic properties of the Pr\"{u}fer code see \cite{Vanlint} page 13. 

Let $\Tk$ be the set of spanning tree of $K_{n+1}$ with $deg(v_0)=k$. We compute the sum 
$S=\sum_{T \in \Tk } p(T)$ in two ways. 

\begin{enumerate}
\item If we forget the condition $deg(v_0)=k$, then any sequence of length $n-1$ of $a_i$'s would appear exactly once as the Pr\"{u}fer code of a spanning tree of $K_{n+1}$. For trees in $\Tk$ the label $a_0=1$ appears exactly $k-1$ times, and the remaining entries are filled with the rest of $a_i$'s with no extra conditions. 
In order to specify the code for a tree $T\in T_k$, we have to chose the location of the $k-1$ $a_0-$entries, where we have ${\Choose{n-1}{k-1}}$ choices, and the remaining $n-k$ entries are filled arbitrary with the remaining $a_i$'s, which contributes $(a_1+\cdots+a_n)^{n-k}$ to $S$. Therefore $$S={\Choose{n-1}{k-1}} (a_1+\cdots+a_n)^{n-k} .$$
\item For a tree $T\in \Tk$ the vertex $v_0$ is connected to exactly $k$ disjoint sub-trees $\tilde{G}_1,\dots,\tilde{G}_k$ of $T$. $v_0$ is connected to a unique vertex in each  $\tilde{G}_i$, and we denote by $G_i$ the union of $\tilde{G}_i$, $v_0$ and the edge connecting them. The vertex set of $\tilde{G}_i$'s gives a partition $\p$ of $\A$ into exactly $k$ parts. The Pr\"{u}fer code of each $G_i$ uniquely determines $G_i$, and if we know all the $G_i$'s then $T$ and the Pr\"{u}fer code of it are uniquely determined. Therefore we have: 
\begin{eqnarray*}
S & = &  \sum_{T \in \Tk } p(T) \\
 & = & \sum_{\p \in SP(\A;k)} \prod_{i=1}^k \left(\sum_{\substack{  G_i :\text{ spanning tree on} \\ \text{ the vertex set } \p_i\cup v_0 \\ deg_{G_i}(v_0)=1}} p(G_i) \right)
\end{eqnarray*}
Note that the Pr\"{u}fer code of each $G_i$ is a sequence of length $\ell(\p_i)-1$, and the entries are from $\p_i$. If we take the sum over all such trees any sequence of length $\ell(\p_i)-1$ with entries in $\p_i$ appears exactly once, so $$\sum_{\substack{  G_i :\text{ spanning tree on} \\ \text{ the vertex set } \p_i\cup v_0 \\ deg_{G_i}(v_0)=1}} p(G_i)=|\p_i|^{\ell(\p_i)-1} .$$ Thus we have $$S=\sum_{\p \in SP(\A;k) } \prod_{i=1}^k |\p_i|^{\ell(\p_i)-1} .$$

\end{enumerate}

\end{proof}

\begin{definition}

For $x\in \mathbb{R}$ we denote the falling factorial of $x$ by $$(x)_n:=x(x-1)\cdots (x-(n-1)) .$$
Let $P(x_1,\cdots,x_n)\in \mathbb{R}[x_1,\cdots,x_n]$ be a polynomial in $n$ variables. If $$P=\sum c_{i_1,\cdots,i_n} x_1^{i_1} \cdots x_n^{i_n} ,$$ we define the \emph{falling factorialization} of $P$ to be $$FF(P)=\sum c_{i_1,\cdots,i_n} (x_1)_{i_1} \cdots (x_n)_{i_n} .$$
\end{definition}

\begin{remark}\label{FFmultinomial}
It is straight forward to see that we have:
$$(x+y)_n=\sum_{i=0}^n \Choose{n}{i} (x)_i (y)_{n-i} .$$

Using induction on the number of variables, we see that for falling factorials we have the multinomial coefficient theorem.
$$(x_1+\cdots+x_n)_k=\sum_{k_1+\cdots+k_n=k}\Choose{k}{k_1,\cdots,k_n} (x_1)_{k_1} \cdots (x_n)_{k_n}$$

\end{remark}

\begin{theorem}\label{combthm}
Let $\A=\Set{a_1,\cdots,a_n}$ be a multi-set of integers. 
$$\sum_{\p\in SP(\A;k)}{\Choose{||\p|+\mathbb{1}|}{|\p|+\mathbb{1}}} \prod_{i=1}^k {\Choose{|\p_i+\mathbb{1}|}{\p_i+\mathbb{1}}} = {\Choose{n-1}{k-1}} {\Choose{|\A+\mathbb{1}|}{\A+\mathbb{1}}} .$$
\end{theorem}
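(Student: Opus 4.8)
The plan is to derive the identity from Lemma \ref{comblemma} by applying the falling factorialization operator $FF$ and then specializing the resulting polynomial identity. Throughout, for $\p\in SP(\A;k)$ write $s_i=|\p_i|$ for the weight and $m_i=\ell(\p_i)$ for the size of the $i$-th block, so that $\sum_i s_i=|\A|$ and $\sum_i m_i=n$.

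First I would strip the multinomial coefficients down to their combinatorial core. Since $||\p|+\mathbb{1}|=|\A|+k$ and $|\p_i+\mathbb{1}|=s_i+m_i$, we have $\Choose{||\p|+\mathbb{1}|}{|\p|+\mathbb{1}}=\frac{(|\A|+k)!}{\prod_i(s_i+1)!}$ and $\Choose{|\p_i+\mathbb{1}|}{\p_i+\mathbb{1}}=\frac{(s_i+m_i)!}{\prod_{j\in\p_i}(a_j+1)!}$. Because the blocks partition $\A$, the product of the latter denominators over $i$ equals $(\A+\mathbb{1})!$, which also appears on the right-hand side inside $\Choose{|\A+\mathbb{1}|}{\A+\mathbb{1}}=\frac{(|\A|+n)!}{(\A+\mathbb{1})!}$. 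Cancelling the common factor $(\A+\mathbb{1})!$ and using $\frac{(s_i+m_i)!}{(s_i+1)!}=(s_i+m_i)_{m_i-1}$ together with $\frac{(|\A|+n)!}{(|\A|+k)!}=(|\A|+n)_{\,n-k}$, the theorem becomes equivalent to
\[ \sum_{\p\in SP(\A;k)}\prod_{i=1}^{k}(s_i+m_i)_{m_i-1}=\Choose{n-1}{k-1}\,(|\A|+n)_{\,n-k}. \]
Since $s_i+m_i=\sum_{j\in\p_i}(a_j+1)$ and $|\A|+n=\sum_j(a_j+1)$, this is exactly the value at $x_j=a_j+1$ of the polynomial identity
\[ \sum_{\p\in SP(\A;k)}\prod_{i=1}^{k}\Big(\sum_{j\in\p_i}x_j\Big)_{m_i-1}=\Choose{n-1}{k-1}\,(x_1+\cdots+x_n)_{\,n-k}. \qquad(\ast) \]

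Next I would prove $(\ast)$ by applying $FF$ (in the variables $x_1,\dots,x_n$) to the polynomial identity of Lemma \ref{comblemma}, namely $\sum_{\p}\prod_i(\sum_{j\in\p_i}x_j)^{m_i-1}=\Choose{n-1}{k-1}(x_1+\cdots+x_n)^{n-k}$. Two elementary facts about $FF$ drive this. First, $FF$ is multiplicative on products of polynomials in \emph{disjoint} sets of variables: as the blocks $\p_i$ are disjoint, in every monomial of $\prod_i(\sum_{j\in\p_i}x_j)^{m_i-1}$ the exponent of each $x_j$ comes from a single factor, so $FF$ of the product equals the product of the $FF$'s. Second, expanding $(\sum_{j\in\p_i}x_j)^{m_i-1}$ by the ordinary multinomial theorem, applying $FF$ termwise, and re-summing via Remark \ref{FFmultinomial} gives $FF\big[(\sum_{j\in\p_i}x_j)^{m_i-1}\big]=(\sum_{j\in\p_i}x_j)_{m_i-1}$, and the same computation on the right-hand side gives $FF\big[(x_1+\cdots+x_n)^{n-k}\big]=(x_1+\cdots+x_n)_{\,n-k}$. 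Combining these turns Lemma \ref{comblemma} into $(\ast)$. Finally, specializing $(\ast)$ at $x_j=a_j+1$ and retracing the reduction of the first paragraph (restoring the factor $(\A+\mathbb{1})!$) yields the stated identity.

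The main obstacle is the middle step: one must verify carefully that $FF$ commutes with forming the product over the disjoint blocks and that the per-block expand-then-resum computation is legitimate, since $FF$ is only \emph{linear}, not multiplicative in general — it is precisely the disjointness of the variable supports that rescues multiplicativity here. The remaining manipulations (expanding the multinomials into factorials, cancelling $(\A+\mathbb{1})!$, and recognizing the two factorial ratios as falling factorials) are routine bookkeeping.
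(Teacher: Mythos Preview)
Your proof is correct and follows essentially the same route as the paper: reduce the multinomial identity to the falling-factorial identity $\sum_{\p}\prod_i(|\p_i|)_{\ell(\p_i)-1}=\binom{n-1}{k-1}(|\B|)_{n-k}$ with $\B=\A+\mathbb{1}$, and then recognize both sides as the falling factorialization (via Remark \ref{FFmultinomial}) of the polynomial identity of Lemma \ref{comblemma}. Your write-up is in fact more explicit than the paper about why $FF$ distributes over the product of block factors (disjoint variable supports), a point the paper leaves implicit.
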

\begin{proof}
We can rewrite the identity as
$$\sum_{\p\in SP(\A;k)}  (|\p_i+\mathbb{1}|)_{\ell(\p_i)-1} =  {\Choose{n-1}{k-1}} (|\A+\mathbb{1}|)_{n-k} ,$$
or equivalently, if we set $\B=\A+\mathbb{1}$ as
\begin{equation}\label{FFidentity}
\sum_{\p\in SP(\B;k)} \prod_{i=1}^k (|\p_i|)_{\ell(\p_i)-1} =  {\Choose{n-1}{k-1}} (|\B|)_{n-k} .
\end{equation}

Note that by Remark \ref{FFmultinomial} the left hand side of (\ref{FFidentity}) is the falling factorialization of $$\sum_{\p \in SP(\B;k) } \prod_{i=1}^k |\p_i|^{\ell(\p_i)-1} ,$$ and similarly the right hand is the falling factorialization of $${\Choose{n-1}{k-1}} (|\B|)^{n-k} .$$

By Lemma \ref{comblemma} $$\sum_{\p \in SP(\B;k) } \prod_{i=1}^k |\p_i|^{\ell(\p_i)-1} = {\Choose{n-1}{k-1}} (|\B|)^{n-k} ,$$
so their falling factorializations are also equal, which completes the proof.
\end{proof}



\bibliography{}

\end{document}